\renewcommand{\Re}{{\operatorname{Re}\,}}
\renewcommand{\Im}{{\operatorname{Im}\,}}
\renewcommand{\epsilon}{\varepsilon}
\newcommand{\R}{{\mathbb R}}
\newcommand{\C}{{\mathbb C}}
\newcommand{\Z}{{\mathbb Z}}
\renewcommand{\phi}{\varphi}
\numberwithin{equation}{subsection}
\newtheorem{theo}[equation]{{\sc Theorem}}
\newtheorem{lem}[equation]{{\sc Lemma}}
\newtheorem{prop}[equation]{{\sc Proposition}}
\newtheorem{claim}[equation]{{\sc Claim}}
\theoremstyle{definition}
\newtheorem{defn}[equation]{{\sc Definition}}
\theoremstyle{remark}
\newtheorem{rem}[equation]{Remark}
\title[Nodal length of Steklov eigenfunctions]{Nodal length of Steklov eigenfunctions on real-analytic Riemannian surfaces}
\author{Iosif Polterovich}
\address{D\'e\-par\-te\-ment de math\'ematiques et de
sta\-tistique, Univer\-sit\'e de Mont\-r\'eal CP 6128 succ
Centre-Ville, Mont\-r\'eal QC  H3C 3J7, Canada.}
\email{iossif@dms.umontreal.ca}
\author{David A. Sher}
\address{Department of Mathematics, University of Michigan, 2074 East Hall, 530 Church Street, Ann Arbor  MI  48109-1043, U.S.A.}
\curraddr{Department of Mathematical Sciences, DePaul University, 2320 N. Kenmore Ave., Chicago IL 60614,  U.S.A.}
\email{dsher@depaul.edu}
\author{John A. Toth}
\address{Department of Mathematics and
Statistics, McGill University, 805 Sherbrooke Str. West,
Montr\'eal QC H3A 2K6, Ca\-na\-da.}
\email{jtoth@math.mcgill.ca} 
\date{}
\begin{document}
\begin{abstract} We prove sharp upper and lower bounds for the nodal length of Steklov eigenfunctions on real-analytic Riemannian surfaces with boundary. The argument involves frequency function methods for harmonic functions in the interior of the surface as well as the construction of exponentially accurate approximations for the Steklov eigenfunctions near the boundary.
\end{abstract}

\maketitle
\section{Introduction and main results}
\subsection{Steklov problem}
Let $\Omega$ be a compact  $n$-dimensional manifold with $C^{\infty}$ boundary $\partial \Omega$ and unit exterior normal $\nu.$ We consider the Steklov eigenvalue problem
\begin{align} \label{steklov}
\Delta u_{\lambda}(x) &= 0, \,\, x \in \Omega, \nonumber \\
 \partial_{\nu} u_{\lambda}(q) &= \lambda u_{\lambda}(q), \,\, q \in \partial \Omega. \end{align}
The solutions $u_\lambda \in C^\infty (\Omega)$ are called Steklov eigenfunctions.  Let $\gamma_{\partial \Omega}: C(\Omega) \to C(\partial \Omega)$ denote the boundary restriction map. The boundary restrictions of the Steklov eigenfunctions, 
$\phi_{\lambda}:= \gamma_{\partial \Omega} u_{\lambda}$, are the eigenfunctions of the Dirichlet-to-Neumann (sometimes also referred to as the Poincar\'e--Steklov) operator $P$.  It is well-known that $P$ is a first-order homogeneous, self-adjoint, elliptic pseudodifferential operator $P \in \Psi^{1}(\partial \Omega),$  where  $P - \sqrt{-\Delta_{\partial \Omega} } \in \Psi^{0}(\partial \Omega)$ (see \cite{Ta}).  Consequently, the Steklov spectrum, which coincides with the spectrum  of $P$,  consists of an infinite sequence of eigenvalues $\lambda_j$ with $\lambda_j \to \infty$ as $j \to \infty.$

Recently, there has been a significant interest in the study of the nodal geometry  for the Steklov problem, see \cite{BLin, Ze, WZ}. However, these results 
were concerned with the nodal sets of the Dirichlet-to-Neumann eigenfunctions $\phi_\lambda$, i.e. the boundary nodal sets. The present paper focusses 
on the interior nodal sets, i.e. the nodal sets of the Steklov eigenfunctions~$u_\lambda$. With the exception of the recent preprint \cite{SWZ} (see Remark \ref{swz}), very little  has been previously known on this subject. We also note that  Courant type bounds were earlier obtained on the number of the interior (in any dimension) and boundary (in dimension two) nodal domains  of Steklov eigenfunctions (\cite{KS, AM, KKP}, see also \cite[section 6.1]{GP}). 

\subsection{Main result}  Motivated by a celebrated conjecture of Yau \cite{Yau, Yau2}, it has been recently suggested  in  \cite[Open problem 11 (i)]{GP} that the ratio between the $(n-1)$--dimensional Hausdorff measure of the nodal set of a Steklov eigenfunction $u_\lambda$  and the corresponding Steklov eigenvalue $\lambda$ is bounded above and below by some positive constants  depending only on the geometry of the manifold.  Our main result below gives a positive answer to  this conjecture  for real-analytic Riemannian surfaces. Note that, in this case,  the $1$-dimensional Hausdorff measure of a nodal set $Z_{u_\lambda}$ is simply equal
to its total length, which we will denote by $\mathcal{L}(Z_{u_\lambda})$. 
\bigskip

\begin{theo} \label{main} Let $\Omega$ be a real-analytic compact Riemannian surface with boundary.
 Then there exist  constants $C_j =C_{j}(\Omega)>0, \,  j=1,2,$ such that, for any Steklov eigenfunction $u_\lambda$ with an eigenvalue $\lambda>0$,
the total length of its nodal set $Z_{u_\lambda}$ satisfies:
$$ C_{1} \lambda \leq {\mathcal L}(Z_{u_{\lambda}}) \leq C_{2} \lambda.$$
\end{theo}
This result may be viewed as an analogue of the Donnelly-Fefferman bound for the size of the nodal set of Laplace eigenfunctions on real-analytic manifolds \cite{DF}.
\begin{rem}
To illustrate the nodal bounds in Theorem \ref{main}, consider the Steklov eigenfunctions on a unit disk corresponding to the double eigenvalue  $\lambda_{2n-1}=\lambda_{2n}=n$, $n=1,2,.\dots$.   They can be represented in polar coordinates by $u(r\,\theta)=r^n \sin(n\theta +\alpha)$ for
some $\alpha \in [0, \pi/2]$. The nodal set of each such eigenfunction is a union of $n$ diameters, of total length $2n$. 
\end{rem}

\begin{rem}
\label{swz}
Our methods here are specific to the case of real-analytic surfaces. In higher dimensions, Steklov eigenfunctions on $\Omega$ are much  more complicated and their nodal structures are not well-understood. In the case of a general smooth $n$-dimensional manifold with smooth boundary, a non-sharp lower bound
\[|Z_{\lambda}|\geq c\lambda^{\frac{2-n}{2}}\]
for the volume $|Z_{\lambda}|$ of the interior nodal sets has recently been proven in \cite{SWZ}.  It can be viewed as the Steklov analogue of the lower bounds on the size of the nodal sets of Laplace eigenfunctions obtained in \cite{CM, SZ, HS, M, HW}. 

After the first  version of the present paper was posted on the archive,  an upper bound of order $\lambda^{3/2}$ on the size of the nodal set of Steklov eigenfunctions  on surfaces 
 was obtained in \cite{Zhu} using a  quite different approach. While this bound is not sharp, it is valid for arbitrary compact smooth surfaces with boundary.
More recently, while the present paper was under review, the sharp upper bound in Theorem \ref{main} has been extended in \cite{Zhu2} to real-analytic Riemannian manifolds  of arbitrary dimension.
\end{rem}

\subsection{Sketch of the proof}
\label{sketch}
 For any closed manifold with boundary $\Omega$, Steklov eigenfunctions $u_{\lambda}$ decay rapidly into the interior of $\Omega$ \cite{HLu,GPPS}. In order to analyze their nodal lengths, we must therefore consider a neighborhood of the boundary and its complement separately. The idea is to use quasimodes near the boundary and frequency function techniques in the interior.

We begin, in section 2, by constructing quasimodes for the Dirichlet-to-Neumann operator $P$. In Proposition \ref{boundary lemma}, we show that trigonometric functions are approximate eigenfunctions for $P$, up to an exponentially decaying error term. The proof uses the $C^{\omega}$ surface assumption in a very strong way: from \cite{GPPS}, we conclude that the error term decays faster than any polynomial in $\lambda$, and the assumption of analyticity allows us to improve the decay of the error term to exponential. From Proposition \ref{boundary lemma} and some fairly standard linear algebra techniques, we show in Lemma \ref{quasimodeapprox} that the boundary Steklov eigenfunctions $\varphi_{\lambda}$ can be approximated by trigonometric functions $\psi_{\lambda}$ plus an error $f_{\lambda}$ which decays exponentially in $\lambda$ in any $C^k$ norm.

We would like to use the quasimode approximation to estimate nodal length of a Steklov eigenfunction $u_{\lambda}$ near the boundary, but from the example of the annulus (see Example \ref{annulus}), we know that it is possible for $u_{\lambda}$ to itself be exponentially small near one or more boundary components. Indeed, this seems to be the generic situation. Near these ``residual" boundary components, the error in the quasimode approximation can be larger than the quasimode itself and hence the approximation is not particularly useful. Our proof avoids this difficulty via harmonic extension of the Steklov eigenfunctions across residual boundary components. By controlling the $C^k$-norm of the extended eigenfunctions (see Lemma \ref{extension}), one can effectively treat a neighborhood of the ``residual" boundary components in the same way as the interior of $D.$

To illustrate the proof, consider a Steklov eigenfunction $u_{\lambda}$ on the annular domain $D$ in Figure~\ref{fig:mainfig}. Suppose that $\partial D_1$ is a non-residual, i.e. ``dominant", boundary component for $u_{\lambda}$ and that $\partial D_2$ is a residual boundary component. In section 3.2, we extend our boundary quasimode approximation given by Lemma \ref{quasimodeapprox} into the interior. The exponential decay of the error means that the approximation is effective in a $\lambda$-independent neighborhood of $\partial D_1$. A direct comparison of nodal sets (section 4.2) then gives us upper and lower bounds on the nodal length of $u_{\lambda}$ in a small neighborhood of $\partial D_1$, denoted by the dotted line in Figure \ref{fig:mainfig}.

\begin{figure}
	\centering
	\includegraphics[scale=1]{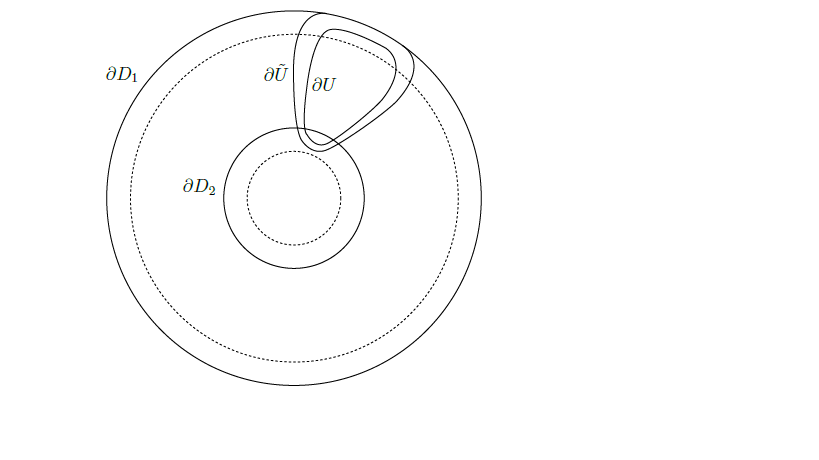}
	\caption{Illustration of the proof. The domain $D$ is the annulus between the boundary components $\partial D_1$ and $\partial D_2$, denoted by solid lines. The dotted circles mark a neighborhood of $\partial D_1$ in $D$ and a neighborhood of $\partial D_2$ in $D^c$. The two sets $U\subset\tilde U$ are as illustrated.}
	\label{fig:mainfig}
\end{figure}

To treat the interior and a neighborhood of the residual boundary components, we extend $u_{\lambda}$ (section 2.4) to a small $\lambda$-independent neighborhood of $\partial D_2$, with boundary denoted by the dotted circle. In this neighborhood, $u_{\lambda}$ is still exponentially small in $\lambda$ (section 4.3). Now let $U\subset\tilde U$ be two sets as in Figure \ref{fig:mainfig}. In section 4.4, we use standard frequency function techniques to bound the nodal length of $u_{\lambda}$ in $U$ from above by the ``renormalized Almgren frequency function" of the larger domain $\tilde U$. Then, using the quasimode approximation near the boundary and the fact that a portion of $\partial\tilde U$ coincides with $\partial D_1$, we bound this frequency function by a multiple of $\lambda$. A covering argument extends these upper bounds to all of $D$ outside a neighborhood of $\partial D_1$, including a neighborhood of $\partial D_2$. Combining these upper bounds with the two-sided bounds near $\partial D_1$ completes the proof.

\begin{rem} A particularly novel aspect of the problem is the exponential decay into the interior of the eigenfunctions, namely that for some positive constants $\tau$ and $C$ depending only on the geometry of $\Omega$,
\begin{equation}\label{decayintointerior}|u_{\lambda}(x)|\leq Ce^{-\tau d(x,\partial\Omega)\lambda}.\end{equation}
This is an immediate consequence of Lemma \ref{APPROX} and the exponential decay of the interior quasimode approximations. A similar estimate holds for the derivatives of order $k\ge 1$ of $u_{\lambda}$, provided we multiply the right-hand side by $\lambda^k$. Our problem therefore resembles the question  of estimating the size of nodal sets in forbidden regions
for eigenfunctions of Schr\"odinger operators (see \cite{HZZ, CT}). A different method of proving \eqref{decayintointerior} has been communicated to us by M. Taylor \cite{Ta1}.
\end{rem}

\begin{rem} It also follows from our results  (see Proposition \ref{cor:eigenvaluedecay})  that for real-analytic surfaces, the error term  in the eigenvalue approximation of \cite[Theorem 1.4]{GPPS} in fact decays exponentially as  the index of the eigenvalue increases.  In particular, for a simply connected real-analytic surface $\Omega$, there exists a constant $\tau >0$ depending on the geometry of $\Omega$  such that
$$
\lambda_{2j}=\lambda_{2j+1}+O(e^{-\tau j})=\frac{2\pi j}{{\rm Length}(\partial \Omega)}+O(e^{-\tau j}).
$$
This improves the error estimate $O(j^{-\infty})$  obtained independently by Rozenblyum and Guille\-min-Melrose (see \cite{Ro, Ed}) for smooth simply connected surfaces.

\end{rem}

\subsection{Example: Steklov problem on an annulus}
\label{annulus}
To illustrate several features of Steklov eigenvalues and eigenfunctions used in the proof of Theorem \ref{main} outlined in section \ref{sketch},  
consider an annulus $A(1,\epsilon)$ with inner radius $\epsilon$ and outer radius $1$. 

First we compare its Steklov spectrum to the spectrum of a union of two disks, of radii $1$ and $\epsilon$, which consists of the double eigenvalue zero, as well as the double eigenvalues $k$ and $k/\epsilon$ for each $k\in\mathbb N$. Let us show  that the difference between these eigenvalues and the  corresponding eigenvalues of $A(1,\epsilon)$ is in fact exponentially vanishing in $k$.  Indeed, as computed in \cite{D} (see also \cite[Section 4.2]{GP}), the nonzero Steklov eigenvalues of the annulus are given by the roots of the quadratic polynomial
\[p_k(\sigma)=\sigma^2 - \sigma k\left(\frac{\epsilon+1}{\epsilon}\right)\left(\frac{1+\epsilon^{2k}}{1-\epsilon^{2k}}\right)+\frac{1}{\epsilon}k^2,\]
for each $k\in\mathbb N$ (each root corresponds to a double eigenvalue). 
We could compute the eigenvalues directly, but it is easier to compare $p_k(\sigma)$ to the polynomial
\[\tilde p_k(\sigma)=\sigma^2 - \sigma k\left(\frac{\epsilon+1}{\epsilon}\right)+\frac{1}{\epsilon}k^2,\]
which has roots $k$ and $k/\epsilon$. In fact $\tilde p_k(\sigma) - f(k,\epsilon) \sigma =p_k(\sigma)$, where 
\[f(k,\epsilon)=k\left(\frac{\epsilon+1}{\epsilon}\right)\frac{2\epsilon^{2k}}{1-\epsilon^{2k}}.\] Since $f(k,\epsilon)=O(k\epsilon^{2k})$, a straightforward calculation shows that the roots of $p_k(\sigma)$ differ from $k$ and $k/\epsilon$ by $O(k\epsilon^{2k})$ as well. The error is exponentially decreasing in $k$, which agrees with the eigenvalue approximation result in Proposition \ref{cor:eigenvaluedecay}.

Steklov eigenfunctions on surfaces also exhibit exponential decay at certain boundary components, which motivates our definition of ``dominant" and ``residual" boundary components (see Definition \ref{dominant}). To illustrate this, consider an eigenfunction $u_{\sigma_j}$ of the annulus $A(1,\epsilon)$ corresponding to an eigenvalue $\sigma_j=k+O(k\epsilon^{2k})$. As computed in \cite{GP},
\[u_{\sigma_j}(r,\theta)=C_k\left(r^{k}+\frac{k-\sigma_j}{k+\sigma_j}r^{-k}\right)T(k\theta),\]
where $T(k\theta)$ is a unit norm linear combination of $\cos(k\theta)$ and $\sin(k\theta)$ and $C_k$ is an appropriate normalizing constant. Since $k-\sigma_j=O(k\epsilon^{2k})$, we see that $C_k\sim 1$, that the $L^2$-norm of $u_{\sigma_j}$ on $\{r=1\}$ is roughly $1$, and that the $L^2$-norm on $\{r=\epsilon\}$ is $O(\epsilon^k)$ - which is exponentially decaying in $k$. So, in this case,  the circle of radius $1$ is a dominant boundary component, while the circle of radius $\epsilon$ is a residual boundary component. A similar analysis holds for the eigenfunctions corresponding to eigenvalues of the form $k/\epsilon + O(k\epsilon^{2k})$, with the roles of the boundary components reversed.


\subsection*{Acknowledgments} The authors are grateful to M. Sodin and M. Taylor for useful discussions, and to the anonymous referee for many helpful suggestions on the presentation of the paper.  The research of I.P. was partially supported by NSERC, FRQNT and Canada Research Chairs Program. The research of D.S was partially supported by NSF EMSW21-RTG 1045119. The research of J.T. was partially supported by NSERC and FRQNT. I.P. and J.T. were also supported by the French National Research Agency project Gerasic-ANR-13-BS01-0007-0.

\section{ Steklov problem for real-analytic planar domains}
\subsection{Koebe uniformization}
We first prove Theorem \ref{main} in the case where $\Omega$ is a real-analytic planar domain and explain the fairly minor modifications needed to treat the general case of compact Riemann surfaces with boundary  in  section \ref{general}. The two-dimensional case is quite special. Indeed, unlike the higher-dimensional case, for surfaces the Steklov operator $P: C^{\infty}(\partial \Omega) \to C^{\infty}(\partial \Omega)$ agrees with the square root of the Laplacian on the various boundary components modulo  a smoothing operator. Upon reparametrization by arclength, the latter operator is consequently (modulo smoothing) 
just the Fourier multiplier $M$ acting on the component boundary circles. Although not necessary, one can see the former directly by applying conformal mapping.
 Indeed, by the Koebe uniformization theorem \cite{HSch}, one can conformally map $\Omega$ to a planar domain $D$, where $D$ is the disk of radius 1 with a finite number of interior disks removed. In the cases where $\Omega$ is simply-connected, this reduces to the Riemann mapping theorem. When $\partial \Omega$ is real-analytic, it follows by $C^{\omega}$ regularity up to the boundary in the associated Dirichlet problem for the Green's function \cite{MN} that   the conformal map $f: D\to\Omega$ extends to a real-analytic map of $\overline{D}$ to $\overline{\Omega}.$ Moreover, $f$ maps boundary to boundary in a univalent fashion. In particular, the induced boundary restriction $f_{\partial \Omega}: \partial D \to \partial \Omega$ is a $C^{\omega}$-diffeomorphism.

The boundary $\partial D$ consists of a union of circles, which we denote by  $\partial D_1$, $\partial D_2,\dots,\partial D_k$, with radii $\rho_1,\dots,\rho_k$ and centers $c_1,\dots,c_k$ respectively. The corresponding boundary components of $\Omega$ are denoted by $\partial \Omega_j := f_{\partial \Omega}(\partial D_j); j=1,2,...,k.$  We let $\theta_j\in [0,2\pi]$ be the usual angle coordinate on $\partial D_j$ for each $j$, and let the arc length coordinate $q_j$ be $\rho_j\theta_j$. Let $q$ be an arc length coordinate on $\partial D$ which coincides with $q_j$ when we restrict attention to $\partial D_j$. Finally, let $dq$ and $dq_j$ be the measures on $\partial D$ and $\partial D_j$ respectively induced by the Euclidean measure $dx$ on $\mathbb R^2$. As in \cite{Ed}, the Steklov problem in (\ref{steklov}) is conformally mapped to the problem
\begin{align} \label{disc}
\Delta v_{\lambda}(x) &= 0, \,\,\, x\in D, \nonumber \\
\partial_{\nu}v_{\lambda} (q) &= \lambda g v_{\lambda}(q),\,\, q \in \partial D, 
\end{align}
with $g=df(q)=|f'(q)|$ analytic for $ q \in \partial D$ and $g\neq 0$. By conformal mapping, $v_{\lambda}=f^{*}u_{\lambda}$ - so without loss of generality, it suffices to work with the conformal model (\ref{disc}) and we will do so here. We will abuse notation somewhat and denote the corresponding Steklov operator in the conformal model (\ref{disc}) by $P: C^{\infty}(\partial D) \to C^{\infty}(\partial D)$ as well, and its eigenfunctions by $\phi_{\lambda}$. We henceforth make the normalization that
\begin{equation} \label{normalization}
\| \phi_{\lambda} \|_{L^2(\partial D)} =1. \end{equation}
Since we consistently work with the model case in (\ref{disc}), this should not lead to confusion.

Given $\tau=(\tau_1,\dots,\tau_k) \in (0,1)^k,$ we complexify $\partial D$ to a Grauert tube  (ie. an annulus)
$$\partial D^{\C}_{\tau} = \{ \oplus_{j=1}^k e^{i\theta_j -\rho_j}; (\rho_j,\theta_j) \in [0,\tau_j) \times [0,2\pi] \}.$$
We choose $\tau_j >0$ here to be the {\em analytic modulus} of $g|_{\partial D_j} \in C^{\omega}(\partial D_j)$; that is, the maximal tube radius for which
$g$ has a holomorphic extension to $(\partial D_j)^{\C}_{\tau_j}$. 

We also note that the length $L_j$ of $\partial\Omega_j = f_{\partial \Omega}(\partial D_j)$ is $\int_{\partial D_j}g(q_j)\ dq_j$, and define a new coordinate on $\partial D_j$ by $s_j(q_j) = \int_{0}^{q_j} g(q_j')\ dq_j' \in [0,L_j]$. Let $s$ be the corresponding coordinate on all of $\partial D$. Note that since $g$ is analytic and strictly positive, this is an analytic reparametrization with analytic inverse.

\subsection{Potential layer formulas and the Steklov operator} \label{potential theory}
We briefly review the characterization of the Steklov operator $P: C^{\infty}(\partial D) \to C^{\infty}(\partial D)$ in terms of potential layer operators.  This material here is well-known and further details can be found in \cite[Sec. 7.1]{Ta}.   
Here, we assume that $\Omega \subset \R^n$ is a bounded domain with $C^{\infty}$-boundary. Let $G(x,x') \in {\mathcal D}'(\R^n \times \R^n)$ be the ambient free Green's function for $-\Delta = - \sum_{j=1}^{n} \partial^2_{ x_j}$ in $\R^n$.
Consider the single and double layer operators ${\mathcal Sl}: C^{\infty}(\partial \Omega) \to C^{\infty}(\Omega \cup (\R^n \setminus \overline{\Omega} ) )$ and ${\mathcal Dl}: C^{\infty}(\partial \Omega) \to C^{\infty}(\Omega  \cup (\R^n \setminus \overline{\Omega} )  )$ given by
\begin{align} \label{layer}
 {\mathcal Sl} f (x)  &= - \int_{\partial \Omega} G(x,q) f(q) d S(q), \nonumber \\
 {\mathcal Dl} f (x) &= - \int_{\partial \Omega} \partial_{\nu(q)} G(x,q) f(q) d S(q), \,\,\, x \in \Omega \cup (\R^n \setminus \overline{\Omega} ) . \end{align}
 Corresponding to $ {\mathcal Sl}$ and ${\mathcal Dl}$ are the boundary layer operators $S$ and $N:C^{\infty}(\partial\Omega)\to C^{\infty}(\partial\Omega)$ given by
 \begin{align} \label{layer2}
 S f (q)  &= - \int_{\partial \Omega} G(q,q') f(q') d S(q'), \nonumber \\
 N f (q) &=  - 2 \int_{\partial \Omega} \partial_{\nu(q')} G(q,q') f(q') d S(q'), \,\,\, q \in  \partial \Omega. \end{align} 
These operators are classical  pseudodifferential with $S, N \in \Psi^{-1}(\partial \Omega)$, and $S$ is elliptic. 
 Given a function $v  \in C^0(\R^n\setminus\partial \Omega)$ and $q \in \partial \Omega,$ let $v_+(q)$ and $v_{-}(q)$ denote the limits of $v(x)$ as $x \to q$ from $x \in \Omega$ and $x \in \R^n\setminus \overline{\Omega}$ respectively. The layer potential operators in (\ref{layer}) and the induced boundary operators in (\ref{layer2}) are linked via the boundary jumps equations
 \begin{align} \label{bdyjumps}
 {\mathcal Sl} f_{+} (q) &= {\mathcal Sl} f_{-}(q) = Sf(q),  \nonumber \\
 {\mathcal Dl} f _{\pm}(q) &= \pm \frac{1}{2} f(q) + \frac{1}{2} Nf(q), \,\,\,\, q \in \partial \Omega. \end{align}

 Now let $P$ be the Dirichlet-to-Neumann operator for $\Omega$. Consider the Dirichlet problem
 $$ \Delta u = 0 \,\, \text{on} \,\, \Omega, \,\,\,\, u|_{\partial \Omega} = f.$$
 A straightforward application of Green's formula gives
 \begin{equation} \label{intgreen}
 u(x) =  {\mathcal Dl} f(x) - {\mathcal Sl} \, P f (x),\,\,\, x \in \Omega. \end{equation}
 Taking limits in (\ref{intgreen}) from within $\Omega$ and applying (\ref{bdyjumps}) gives
\begin{equation} \label{key}
SP = -\frac{1}{2}( I - N). \end{equation}
Since $S \in \Psi^{-1}(\partial \Omega)$ and is elliptic, it follows from (\ref{key}) by a parametrix construction in the standard pseudodifferential calculus that $P \in \Psi^{1}(\partial \Omega)$ and is also elliptic.  

In the case where $\partial \Omega$ is real-analytic, $P$ is also analytic pseudodifferential in the sense of \cite{BK} (see also \cite[Ch. 5]{Tr}), and we write $P\in \Psi^{1}_{a}(\partial \Omega).$   To see this, we first note integration against $G(x,x')$ in $\mathbb R^n$ is a pseudodifferential operator $\mathcal I$ with full symbol $|\xi|^{-2}$ in the usual coordinates. As the subprincipal symbols are all zero, the symbol satisfies Cauchy estimates and so, $\mathcal I\in\Psi^{-2}_a(\mathbb R^n)$. Since $\partial\Omega$ is real-analytic, the Fermi coordinates $(\bar x,x_n)$ near $\partial\Omega$ are also real-analytic. In terms of these local coordinates, $\sigma_{\mathcal I}(\bar x,x_n,\bar\xi,\xi_n)$ is thus an analytic symbol; hence so is $\partial_{x_n}\sigma_{\mathcal I}(\bar x,x_n,\bar\xi,\xi_n)$. It is easy to check that the symbols of $S$ and $N$ are given, respectively, by 
 $\int_{\mathbb R} \left(\sigma_{\mathcal I}|_{x_n=0} \right) d\xi_n$ and $2\int_{\mathbb R} \left(\left(\partial_{x_n}\sigma_{\mathcal I} + i\xi_n \sigma_{\mathcal I}\right)|_{x_n=0}\right)d\xi_n$ respectively, and hence are also analytic. So $S$, $N \in \Psi^{-1}_a(\partial\Omega).$ By constructing a parametrix for $S$ in the analytic pseudodifferential calculus  (see \cite[Ch. 5]{Tr} for details), then multiplying (\ref{key}) by this parametrix on the left and rearranging, it follows that 
$$P \in \Psi^{1}_{a}(\partial \Omega).$$
In the following, we say that an operator $R \in \Psi^{*}_{a}(\partial \Omega)$ is {\em analytic smoothing} if its Schwartz kernel $K(\cdot, \cdot) \in C^{\omega}(\partial \Omega \times \partial \Omega)$, and we write $R \in \Psi^{-\infty}_{a}(\partial \Omega).$

\begin{rem} In what follows, we use the following notation: given a set $X$ and two non-negative functions $f: X \to \R$ and $g: X \to \R$, throughout the paper  we write $f \lessapprox g$ if there exists a constant $C>0$ such that $f(x) \leq C g(x)$ for all $x \in X.$  In addition, $f \approx g$ will mean that  both $f \lessapprox g$ and $g \lessapprox f$ are satisfied. 
\end{rem}
\subsection{Quasimodes for the Steklov operator}
\label{boundvalues}
It is well-known (\cite{GPPS}, see also \cite{Ed, HL})  that for smooth bounded planar domains,
\begin{equation} \label{smooth}
 P = M +R, \,\, R \in \Psi^{-\infty}(\partial D),\end{equation}
 where $M=\oplus_{j=1}^k M_j$, with $M_j:C^{\infty}(\partial D_j) \to C^{\infty}(\partial D_j)$ the Fourier multiplier defined by 
 $$ M_j e^{2\pi ims_j/L_j} = \frac{2\pi|m|}{L_j} e^{2\pi ims_j/L_j}, \,\, m \in \Z. $$
   Note that $M=\sqrt{-\Delta_{\partial D}}$. As in \cite{GPPS}, let $\mathcal{A}(\alpha)$ be the sequence $\{0,\alpha,\alpha,2\alpha,2\alpha,\dots\}$ and $\mathcal{A}(\alpha_1,\dots,\alpha_k)$ be the union of these sequences, with multiplicity, arranged in the appropriate order. We write
 \begin{equation} \label{qmspec}
  \mu_n := \left\{\mathcal{A}\left(\frac{2\pi}{L_1},\dots,\frac{2\pi}{L_k}\right)\right\}_n, \end{equation}
  which is the $n$th eigenvalue of the operator $M$. As a consequence of \eqref{smooth} and \cite{GPPS}, the eigenvalues of $P$ consist of the sequence
$$\lambda_n = \mu_n + O(n^{-\infty}), \,\, n \in \Z^+.$$
It follows that $\lambda_n \approx n$ as $n \to \infty.$
We abuse notation somewhat and denote the corresponding  orthonormal basis of eigenfunctions of $P$ on $\partial \Omega$ by $\{\phi_{\lambda_n}\}$, so that each $\phi_{\lambda_n}$ is the restriction of $u_{\lambda_n}$ to $\partial\Omega$.
Although the operator $M$ acting on $\partial D$ is non-local, it clearly maps each of the boundary components $\partial D_j$ to itself; that is, for any $f \in C^{\infty}(\partial D)$ with supp $f \subset \partial D_j,$
$$ M f  = M_j f.$$
Denote the $L^2$-normalized eigenfunctions of $M$ on the boundary component $\partial D_j$  by $e_{m,j}(s_j),$ where
\begin{align} \label{qm}
e_{m,j}(s_j) &= e^{2\pi ims_j/L_j},   \,\, s_j \in [0,L_j]. \end{align}
We also let $e_{m,j}(s)$ be the function which is $e_{m,j}(s_j)$ on $\partial D_j$ and 0 on the other boundary components, with an analogous definition for $\varphi_{\lambda_n,j}$.

The following lemma is central to the proof of Theorem \ref{main} and shows that when $\Omega$ is analytic, the functions $e_{m,j}$ are quasimodes for $P$  to exponential error in $m.$ 
\begin{prop} \label{boundary lemma}
Suppose that $\Omega$ is analytic, with $D$, the conformal map $f:D\to\Omega$, and the Dirichlet-to-Neumann operator $P: C^{\infty}(\partial D) \to C^{\infty}(\partial D)$ as in \eqref{disc}. Then:
\begin{enumerate} \item The remainder $R:C^{\infty}(\partial D) \to C^{\infty}(\partial D)$ in (\ref{smooth}) has Schwartz kernel $R \in C^{\omega}(\partial D \times \partial D)$ with analytic modulus at least $\tilde{\tau} >0$ in each variable separately. Here, $\tilde{\tau}$ depends on the analytic modulus of the conformal multiplier $g \in C^{\omega}$ and the geometry of $D$.
\item As $|m|\to\infty$,
$$\,\,  \| P e_{m,j}(s) - \frac{2\pi|m|}{L_j}  e_{m,j}(s) \|_{C^k(\partial D)}  = O_k(e^{- 2\pi \tilde{\tau} |m|/L_j}).$$
In the last line, $O_k(e^{- 2\pi \tilde{\tau} |m|/L_j})$ denotes a non-negative term that is
$\leq C_k e^{- 2\pi \tilde{\tau} |m|/L_j}$ with constant  $C_k>0$ depending on $k.$
\end{enumerate}
\end{prop}

\begin{proof}
Our proof uses the boundary integral equation for $\phi_{\lambda}$. For $x$, $x'\in\mathbb R^2$, we continue to let $G(x,x')$ be the free Green's kernel for $-\Delta_{\R^2} = - \partial_x^2 - \partial_{x'}^2$, which is $-\frac{1}{2\pi}\log|x-x'|$. Using \eqref{key} (see also \cite{Sh,Ro}), the functions $\phi_{\lambda}$ satisfy the boundary jumps equation:

 \begin{equation}\label{bdyjumps2}
 \frac{1}{2} \phi_{\lambda_n}(q)  = \lambda_n \int_{\partial D } G(q,q')  \phi_{\lambda_n}(q') \, g(q')\, dq'
 - \int_{\partial D } K(q,q') \phi_{\lambda}(q') \, dq',  \end{equation}
with
$$
K(q,q')=\partial_{\nu(x')}G(x,x')|_{(x,x')=(q,q')}=-\frac{1}{4\pi}\frac{\partial_{\nu(x')}|x-x'|^2}{|x-x'|^2}|_{(x,x')=(q,q')}.
$$
Change variables in the first integral to integrate in terms of $s'$:
\begin{equation}\label{bdyjumps3}
 \frac{1}{2} \phi_{\lambda_n}(q(s))  = \lambda_n \int_{\partial D } G(q(s),q(s'))  \phi_{\lambda_n}(q(s')) \, ds'
 - \int_{\partial D } K(q(s),q(s')) \phi_{\lambda}(q(s')) \, dq(s').
\end{equation}

We now claim:

\begin{claim}\label{decompint}
\[\frac 12\varphi_{\lambda_n}=\lambda_n \tilde{G} \varphi_{\lambda_n}-\lambda_nK_1\varphi_{\lambda_n}-K_2\varphi_{\lambda_n},\]
where $K_1$, $K_2\in\Psi^{-\infty}_a(\partial D)$ and $\tilde{G} = \oplus_{j=1}^{k} G_j$ with $G_j:C^{\infty}(\partial D_j)\to C^{\infty}(\partial D_j)$ given by
$$(G_j h)(s) = -\frac{1}{2\pi} \int_{0}^{L_j} \log |e^{2\pi is/L_j}- e^{2\pi is'/ L_j}|  \, h(s') ds'.$$
\end{claim}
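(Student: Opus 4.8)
The plan is to start from equation \eqref{bdyjumps3} and decompose the Green's kernel $G(q(s),q(s'))$ into a ``model'' part living on each boundary circle separately plus an analytic remainder. Concretely, on the boundary component $\partial D_j$, which is a circle of radius $\rho_j$ centered at $c_j$, one has $q(s_j) = c_j + \rho_j e^{i\theta_j}$ with $s_j = \rho_j\theta_j$, so $|q(s_j)-q(s_j')| = \rho_j |e^{i\theta_j}-e^{i\theta_j'}| = \rho_j|e^{2\pi i s_j/L_j^0}-e^{2\pi i s_j'/L_j^0}|$ where $L_j^0 = 2\pi\rho_j$ is the \emph{Euclidean} length of $\partial D_j$. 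But the coordinate $s$ is the reparametrized arclength on $\partial\Omega$, for which the relevant length is $L_j = \int_{\partial D_j} g\,dq_j$, so I need to be a little careful: after writing everything in the $s$ coordinate, the diagonal term should be $G_j$ as defined in the claim (with period $L_j$), and the mismatch between $|e^{2\pi i s/L_j}-e^{2\pi i s'/L_j}|$ and the true chordal distance $|q(s)-q(s')|/\rho_j$ expressed in the $s$-coordinate is, by the analyticity of $f_{\partial D}$ and of the reparametrization $q_j\mapsto s_j$, a function of $(s,s')$ whose logarithm is \emph{real-analytic up to and across the diagonal} — the logarithmic singularities cancel because both $|q(s)-q(s')|$ and the chord $|e^{2\pi is/L_j}-e^{2\pi is'/L_j}|$ vanish to exactly first order on the diagonal with the same leading coefficient after rescaling. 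This is the standard fact that the difference of two single-layer logarithmic kernels with the same leading singularity has an analytic kernel; I would cite \cite{Ta} or \cite{Tr} for the analytic pseudodifferential version.

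Next I would handle the off-diagonal blocks: for $q\in\partial D_i$ and $q'\in\partial D_j$ with $i\neq j$, the circles $\partial D_i$ and $\partial D_j$ are disjoint compact sets, so $|q-q'|$ is bounded below, $G(q,q') = -\frac{1}{2\pi}\log|q-q'|$ is real-analytic on $\partial D_i\times\partial D_j$, and since $f$ (hence the arclength parametrizations) is real-analytic, the kernel is real-analytic in $(s,s')$ with a definite analytic modulus. Collecting the analytic diagonal-correction kernels and the analytic off-diagonal kernels into a single operator $K_1$, and absorbing the measure discrepancy between $ds'$ and $dq(s')$ together with the analytic kernel $K(q(s),q(s'))$ (note $K$ is smooth on the diagonal already, since for a $C^\omega$ curve $\partial_{\nu(x')}|x-x'|^2/|x-x'|^2$ extends analytically across $s=s'$ — this is the classical observation that the double-layer kernel of an analytic curve is analytic) into $K_2$, gives the claimed decomposition with $K_1,K_2\in\Psi^{-\infty}_a(\partial D)$, i.e.\ operators with real-analytic Schwartz kernels. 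The direct-sum structure $\tilde G=\oplus_j G_j$ is automatic because each $G_j$ only involves the restriction of $\varphi_{\lambda_n}$ to $\partial D_j$.

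I expect the main obstacle to be the bookkeeping around the two different length scales — the Euclidean perimeter $2\pi\rho_j$ of the circle $\partial D_j$ versus the reparametrized length $L_j$ used in the coordinate $s$ — and verifying rigorously that after passing to the $s$-coordinate the ``model'' operator is exactly the $G_j$ written in the claim (with period $L_j$) while everything discarded is genuinely analytic. The key point to get right is that $G_j$ as \emph{defined} is the single-layer operator of the round circle of circumference $L_j$, whereas the true curve has circumference $2\pi\rho_j\neq L_j$ in general; one must check that the substitution $s_j = \int_0^{q_j} g\,dq_j'$ turns the true logarithmic kernel into $-\frac{1}{2\pi}\log|e^{2\pi is_j/L_j}-e^{2\pi is_j'/L_j}|$ plus an analytic error, which follows since $\log\big(|q(s)-q(s')| \big/ \big(\tfrac{L_j}{2\pi}|e^{2\pi is/L_j}-e^{2\pi is'/L_j}|\big)\big)$ extends real-analytically across the diagonal (both numerator and denominator vanish to first order with matching derivative $g(s)$, because $ds = g\,dq$ means the $s$-arclength is the $\Omega$-arclength), and the constant $\log(L_j/2\pi)$ is a bounded — hence analytic-kernel — perturbation. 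Once this is pinned down, the rest is routine: plug the decomposition into \eqref{bdyjumps3} and rearrange.
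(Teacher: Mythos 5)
Your proposal is correct and follows essentially the same route as the paper: absorb the off-diagonal blocks (disjoint circles, hence analytic kernels) and the double-layer kernel (analytic across the diagonal for a $C^\omega$ curve) into $K_1$, $K_2$, and observe that $\log\bigl(|q(s)-q(s')|/|e^{2\pi is/L_j}-e^{2\pi is'/L_j}|\bigr)$ extends analytically across the diagonal because the quotient is analytic and nonzero there. The length-scale bookkeeping you flag as the main obstacle is a non-issue: the leading coefficients of numerator and denominator need not match (and in general do not — they are $1/g(s)$ and $2\pi/L_j$ respectively), since the logarithm of any nonvanishing analytic function is analytic, which is exactly how the paper dispatches this step.
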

\noindent Recall that $\Psi^{-\infty}_a(\partial D)$ is the space of operators with $C^{\omega}$ Schwartz kernels. We also claim:
\begin{claim}\label{logint} If we extend $M_j^{-1}$ to act on $C^{\infty}(\partial D_j)$ by letting it be zero when acting on constants, 
$$ M_j^{-1} = 2G_j.$$
\end{claim}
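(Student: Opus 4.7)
\medskip

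\noindent\textbf{Plan of proof.} The plan is to compute the matrix entries of $2G_j$ in the orthogonal basis $\{e_{m,j}(s)=e^{2\pi ims/L_j}\}_{m\in\Z}$ of $L^2(\partial D_j)$ and compare them to those of $M_j^{-1}$. Because $M_j$ is a Fourier multiplier, it suffices to verify the identity $2G_j e_{m,j}=\frac{L_j}{2\pi|m|}e_{m,j}$ for each $m\neq 0$ and $2G_j e_{0,j}=0$.

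The main input is the classical Mercator-type Fourier expansion
\[
-\log|e^{i\theta}-e^{i\theta'}| \;=\; \sum_{n=1}^{\infty}\frac{\cos\bigl(n(\theta-\theta')\bigr)}{n}
\;=\;\tfrac12\sum_{n\neq 0}\frac{e^{in(\theta-\theta')}}{|n|},
\]
obtained from $-\log|1-z|=\sum_{n\ge1}z^n/n$ on the unit circle (this series converges in $L^p$ on the circle for all finite $p$, which is more than enough to integrate termwise against a smooth function). Substituting $\theta=2\pi s/L_j$, $\theta'=2\pi s'/L_j$ and inserting the expansion into the definition of $G_j$ gives
\[
G_j e_{m,j}(s) \;=\; \frac{1}{4\pi}\sum_{n\neq 0}\frac{e^{2\pi ins/L_j}}{|n|}\int_0^{L_j}e^{2\pi i(m-n)s'/L_j}\,ds'.
\]
The orthogonality relation $\int_0^{L_j}e^{2\pi i(m-n)s'/L_j}\,ds'=L_j\,\delta_{mn}$ picks out the single term $n=m$, yielding
\[
G_j e_{m,j}(s) \;=\;\frac{L_j}{4\pi|m|}\,e_{m,j}(s) \qquad (m\neq 0),
\]
and $G_j e_{0,j}\equiv 0$ since the $n=0$ term is absent from the Fourier expansion.

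Multiplying by $2$ and recalling that $M_j e_{m,j}=\frac{2\pi|m|}{L_j}e_{m,j}$, we see that $2G_j$ agrees with $M_j^{-1}$ on every non-constant mode and annihilates constants, which is exactly the extension specified in the statement. Since $\{e_{m,j}\}$ is a basis, $M_j^{-1}=2G_j$ on $C^{\infty}(\partial D_j)$. No step poses a real obstacle here; the only minor care needed is to justify termwise integration of the logarithmic Fourier series, which is standard because the partial sums converge to $-\log|e^{i\theta}-e^{i\theta'}|$ in $L^1$ of the circle.
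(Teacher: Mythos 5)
Your proof is correct. Both you and the paper reduce the claim to checking that $2G_j$ and $M_j^{-1}$ agree on the Fourier basis $e_{m,j}$, i.e.\ to the single identity $2G_j e_{m,j}=\frac{L_j}{2\pi|m|}e_{m,j}$ for $m\neq 0$, but you establish that identity by a different route. The paper recognizes $-\frac{1}{\pi}\log|x-x'|$ as the Poisson kernel for the Neumann problem on the unit disk and evaluates it against the explicit harmonic function $\frac{r^{|m|}}{|m|}e^{ims}$, then restricts to $r=1$; you instead expand the kernel via the Mercator series $-\log|1-e^{i\phi}|=\sum_{n\ge 1}\cos(n\phi)/n$ and use orthogonality to pick out the $n=m$ term. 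Your computation amounts to reading off the $m$-th Fourier coefficient of the $L^2$ function $-\log|1-e^{i\phi}|$, so the termwise-integration issue you flag is indeed harmless, and your argument is the more elementary and self-contained of the two (it needs only the Taylor series of $\log(1-z)$ rather than potential theory on the disk). A small bonus of your version is that it handles the constant mode explicitly: the absence of the $n=0$ term in the expansion gives $G_j 1=0$ directly, whereas the paper's identity is stated only for $m\neq 0$ and leaves the (easy) $m=0$ case implicit.
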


Assuming both claims, we now prove Proposition \ref{boundary lemma}. The claims combine to show
\[\varphi_{\lambda_n}=\lambda_nM^{-1} \varphi_{\lambda_n}-2\lambda_nK_1\varphi_{\lambda_n}-2K_2\varphi_{\lambda_n}=M^{-1}P\varphi_{\lambda_n}
- 2\lambda_nK_1\varphi_{\lambda_n}- 2K_2\varphi_{\lambda_n}.\] Given $f \in C^{\infty}(\partial D)$, we can write $f = \sum_{k=1}^{\infty} \alpha_k \phi_{\lambda_k}$ with $\alpha_k = O(k^{-\infty})$. By linearity, the boundedness of $K_1$ and $K_2$, and the rapid decay of $\alpha_k$ and of $\alpha_k\lambda_k$, it follows that
\begin{equation}
M^{-1}Pf-f = 2K_1Pf + 2K_2f. \end{equation}
Since $K_{1},K_2 \in \Psi^{-\infty}_a(\partial D)$ and $P \in \Psi^1_a(\partial D),$ standard theory of analytic pseudodifferential operators \cite[Ch. 5]{Tr} implies that $\tilde{K}=2K_1P+2K_2\in\Psi^{-\infty}_a(\partial D)$, with
\begin{equation} \label{decomp upshot}
M^{-1}Pf-f =  \tilde{K} f,\ \textrm{or equivalently }(P-M)f=M \tilde{K} f. \end{equation}

Now, we choose $f(s)= e_{m,j}(s)$. For any $\tilde{\tau}<\tau_j$, by contour deformation we have
\begin{multline} \tilde{K}e_{m,j}(s) = \int_{\partial D_j} \tilde{K}(s,s_j) e^{2\pi ims_j/L_j} ds_j\\ =  e^{-2\pi|m| \tilde{\tau}/L_j} \Big( \int_{\partial D_j} \tilde{K}(s, s_j \pm i\tilde{\tau}) e^{2\pi ims_j/L_j} ds_j \Big) = O(e^{-2\pi|m| \tilde{\tau}/L_j }), \end{multline}
where the choice of $\pm$ is determined by the sign of $m$. The same is true for each $(\partial^{k}_t \tilde{K}e_m)(s)$, so for all $k$,
\[|| \tilde{K}e_{m,j}||_{C^k}=O_k(e^{-2\pi|m|\tilde{\tau}/L_j}).\]
The same is therefore true for $|| \tilde{K}e_{m,j}||_{H^k}$ for any $k$. Since $M\in\Psi^1(\partial D)$ is a (classical) pseudodifferential operator, it is bounded from $H^k\to H^{k-1}$ for each $k$. From this and Sobolev embedding we conclude that $||M\tilde{K}e_{m,j}||_{C^k}=O_k(e^{-2\pi|m|\tau_0/L_j})$ for any $k$. Combining this with \eqref{decomp upshot} completes the proof of the proposition.
\end{proof}


We now prove both claims.
\begin{proof}[Proof of Claim \ref{decompint}] Assume that $q(s)\in\partial D_j$. The portions of the integrals in \eqref{bdyjumps3} over $\partial D\setminus\partial D_j$ can be absorbed into $K_1$ and $K_2$, since $d(\partial D_i,\partial D_j)>0$ for $i\neq j$ and therefore the integrands are real-analytic functions of $s'$. Using the symbol $\cong$ to denote equivalence up to terms of the form $(\lambda_n K_1+K_2)\varphi_{\lambda_n}$ with $K_1$ and $K_2\in\Psi^{-\infty}_a(\partial D)$, we have

\begin{multline}\label{bdyjumps4}
 \frac{1}{2} \phi_{\lambda_n}(q_j(s_j))  \cong  \lambda_n \int_{0}^{L_j} G(q_j(s_j),q_j(s_j'))  \phi_{\lambda_n}(q_j(s_j')) \, ds_j'\\
 - \int_{\partial D_j } K(q_j(s_j),q_j(s_j')) \phi_{\lambda}(q_j(s_j')) \, dq_j(s_j').
\end{multline}

For notational simplicity, we now suppress all $j$ subscripts. The last term in \eqref{bdyjumps4} can be absorbed, because the normal derivative $K(q(s),q(s'))$ is itself real-analytic in $(s,s') \in [0,L] \times [0,L].$ To see this, observe that analyticity away from the diagonal $s = s'$ is clear. To establish the analyticity near $s=s'$ we insert the Taylor expansion $q(s) = q(s') + (s-s') d_{s'}q(s') + \frac{\kappa(s')}{2} |q(s)-q(s')|^2 \nu(s') + [a_T(s,s') d_{s'}q(s') + a_N(s,s') \nu(s')] $ into the formula for $K(q(s),q(s'))$ in (\ref{bdyjumps2}). Here $\kappa(s)$ denotes the curvature at the point $s$. 
The result is that $$ K(q(s),q(s') ) = -\frac{1}{4 \pi} \Big( \frac{\kappa(s')}{2} +  \frac{a_N(s,s')}{|q(s)-q(s')|^2} \Big),$$
and so, $(s,s') \mapsto K(q(s),q(s')) \in C^{\omega}([0,L] \times [0,L])$ since $a_N$ is real-analytic with $a_N(s,s') = O(|s-s'|^3)$ as $s\to s'$. 

Finally, all we need to do to complete the proof is to show that 
\[\frac{1}{2\pi}\log |q(s)-q(s')|-\frac{1}{2\pi}\log |e^{2\pi i s/L}-e^{2\pi i s'/L}|= \frac{1}{2\pi} \log\frac{|q(s)-q(s')|}{|e^{2\pi i s/L}-e^{2\pi i s'/L}|}\]
is analytic. However, the reparametrization $q\to s(q)$ is analytic with analytic inverse, and Taylor expansion of the quotient on the right-hand side shows that it is analytic and nonzero at $s=s'$ (cf.  (\cite[formula (8.8)]{TZ})). Therefore, its logarithm is also analytic there. Analyticity away from $s=s'$ is automatic. \end{proof}

\begin{proof}[Proof of Claim \ref{logint}] By scaling, it suffices to prove the claim for $j=1$ and $L_1=2\pi$. 
We will show \begin{equation} \label{log2}
 -\frac{1}{\pi} \int_{0}^{2\pi} \log |e^{is} -e^{is'}|  e^{ims'} ds' = \frac{1}{|m|} e^{ims}, \,\,\,m \in \Z \setminus{\{0\}}. \end{equation}
The claim follows since $\{e^{ims}\}_{n\in\mathbb Z}$ is a Fourier basis. To prove \eqref{log2}, consider the harmonic function on the disk whose Neumann data is $e^{ims}$. On the one hand, this is $\frac{r^{|m|} }{|m|}e^{ims}$. On the other hand, the Poisson kernel for the Neumann problem is $-\frac{1}{\pi}\log |x-x'|$, so
\[\frac{r^{|m|} }{|m|}e^{ims}=\int_0^{2\pi}-\frac{1}{\pi}\log|re^{is}-e^{is'}|e^{ims'}\ ds'.\]
Restricting to $r=1$ shows \eqref{log2}.\end{proof}

 \subsection{Harmonic extension of interior eigenfunctions across the boundary}
It is well known that a harmonic function on a domain can be extended harmonically across a real analytic boundary (see \cite{LM}). The following lemma, which will be used in the proof of Theorem \ref{main}, provides an explicit estimate on the $C^k$-norms of the harmonic extension of a Steklov eigenfunction $u_\lambda$ across a boundary component $D_j$ in terms of the  $L^2$-norm of its trace $\phi_\lambda$ on $\partial D_j$.

\begin{prop}\label{cont prop} Let  $u_{\lambda} \in C^{\omega}(D)$ be a Steklov eigenfunction. Then, for every connected component $\partial D_j$ of the boundary, and  $0< \epsilon < \tau$  with $\tau$ defined in (\ref{tau}), there is a harmonic continuation of $u_{\lambda}$ (also denoted $u_{\lambda}$) across the boundary component $\partial D_j$ to an annulus $A_j^+(\epsilon)$ of width $\epsilon$. Moreover, for $k=0,1,2,...$ one has  the exterior estimate
$$ \| u_{\lambda} \|_{C^k(A_j^+(\epsilon))} \leq C_{k,\epsilon} \, \lambda^{k+1} e^{\lambda \epsilon}  \, \| \phi_{\lambda} \|_{L^2(\partial D_j)}. $$
\end{prop}

\begin{proof}
 From  Proposition  $\ref{boundary lemma},$ we know $P = M + R$ where $R \in \Psi^{-\infty}_a(\partial D)$. We use this characterization of $P$ to estimate Fourier coefficients of $\phi_{\lambda} |_{\partial D_j}$ along the boundary circle $\partial D_j.$ To simplify the writing in the following we assume  that $L_j =2\pi$ and that the center of the disc $D_j$ is the origin.  Also, we abuse notation somewhat and simply write $\phi_{\lambda}(s)$ for $\phi_{\lambda}(q_j(s))$ in (\ref{fc})-(\ref{fc2}) below.
 
 Then, for $k \in \Z\setminus\{0\},$
\begin{align} \label{fc}
\hat{\phi_{\lambda}}(k) &= \langle e^{isk}, \phi_{\lambda} \rangle
 = \frac{1}{|k|} \langle M e^{isk}, \phi_{\lambda} \rangle = \frac{1}{|k|} \langle e^{isk}, M \phi_{\lambda} \rangle = \frac{1}{|k|} \langle e^{isk},  (P -R) \phi_{\lambda} \rangle \nonumber \\
&= \frac{\lambda}{|k|} \langle e^{isk},  \phi_{\lambda} \rangle   + \frac{1}{|k|} O(e^{-\tau |k|}) \| \phi_{\lambda} \|  = \frac{\lambda}{|k|} \hat{\phi_{\lambda}}(k) + \frac{1}{|k|} O(e^{-\tau |k|}) \| \phi_{\lambda} \|.
\end{align}
From (\ref{fc}), we observe
\begin{equation} \label{fc1} (|k|-\lambda)\hat{\phi_{\lambda}}(k)=O(e^{-\tau |k|}),\end{equation}
and hence,  
\begin{equation}\label{fc2} \hat{\phi_{\lambda}}(k)=O(e^{-\tau |k|}), \,\,\,\, |k| \geq \lambda + 1.\end{equation}

It follows that $\phi_{\lambda}$ holomorphically continues to the strip $[0,2\pi] + i [-\epsilon, \epsilon]$ and so $\phi_{\lambda} |_{\partial D_j}$ holomorphically continues to an annular neighbourhood of $\partial D_j$ of any width $\epsilon <  \tau.$ In terms of the parametrizing coordinates $s \in [0,2\pi],$ one holomorphically continues $\phi_{\lambda}$ to the annulus
$$A_j(\epsilon):= \{ x \in \C;  x = e^{i w},  \, w \in  [0,2\pi] + i[-\epsilon,\epsilon] \}.$$
 Without loss of generality, we assume here that the set where $\Im w <0$ (denoted by $A_j^-(\epsilon)$) is the part of the annulus lying inside the domain $D$ and $\Im w >0$ (denoted by $A_j^+(\epsilon)$)  is the part lying outside. Note that $\epsilon>0$ is {\em independent} of $\lambda.$ It also follows from (\ref{fc1}) that the holomorphic continuation, $\phi_{\lambda}^{\C}$  of the boundary Steklov eigenfunction $\phi_{\lambda}$ satisfies
\begin{equation} \label{hol est}
|\phi_{\lambda}^{\C}(x)| \leq C_{\epsilon} e^{ \lambda |\Im w|}  \| \phi_{\lambda} \|_{L^2(\partial D_j)}, \quad  x = e^{iw}, \end{equation}
with an appropriate constant $C_{\epsilon}>0.$

Now we need to construct the harmonic continuation of the interior Steklov eigenfunction $u_{\lambda} \in C^{\omega}(D).$ By Green's formula,
\begin{equation} \label{cont1}
u_{\lambda}(x)  = ({\mathcal Dl} \phi_{\lambda})(x) - \lambda({\mathcal Sl} g \phi_{\lambda})(x), \,\,\, x \in D. \end{equation}
Since the harmonic continuation of both terms in (\ref{cont1}) is carried out in the same way,  we consider here the second term and then just indicate the minor changes necessary to deal with the first one.  Since the multiplicative factor $\lambda$ is irrelevant to the continuation, we just consider
\begin{equation} \label{cont2}
  ({\mathcal Sl} g \phi_{\lambda})(x) = \frac{1}{2\pi} \int_{\partial D_j} \log | x - q|  \, g(q) \, \phi_{\lambda}(q) \, dq + R \phi_{\lambda} (x), \,\,\, x \in A_j^-(\delta)\end{equation}
  where $R \phi_{\lambda} =\frac{1}{2\pi} \int_{\partial D\setminus\partial D_j}  \log| x-q| \,  g(q)  \,\phi_{\lambda}(q) dq$. That $R\phi_{\lambda}(q)$ harmonically continues to $x \in A_{j}^{+}(\epsilon)$ is immediate since the kernels $G(x,q)$ with $(x,q) \in A_j(\epsilon) \times \partial D_k, j \neq k$ are supported off-diagonal and so $G(x,q)$ is harmonic in each variable. Thus, it is enough to analyze the first term on the RHS of (\ref{cont2}).

 
 The arclength parametrization of $\partial D_j$  is given by $q(s) = e^{is}  \in \partial D_j, \,\, s \in [0,2\pi]$. We let  $\Theta_{\lambda}(s):=  \int_{0}^{s} g(q(s')) \phi_{\lambda}(q(s')) dq(s').$   First, by application of Green's formula in (\ref{disc}) with harmonic measure $ \delta_{\partial D_j}$, it follows that $\int_{0}^{2\pi} g(q(s')) \phi_{\lambda}(q(s')) dq(s') = 0$ and so, in particular, $\Theta_{\lambda} \in C^{\omega}([0,2\pi]).$ Moreover, since $$| \widehat{\Theta_{\lambda}(k)}| \lessapprox |k|^{-1} |\widehat{ g  \phi}_{\lambda}(k)|,$$
  it  has the same holomorphic continuation properties as $ g \phi_{\lambda} (s)$ (with continuation to the strip $[0,2\pi] + i[-\epsilon,\epsilon]).$  Then, making the definition
  $$\Phi_{\lambda}(e^{is}): = \Theta_{\lambda} (s), \quad s \in [0,2\pi],$$
it follows   that  $\Phi_{\lambda} \in C^{\omega}(\partial D_j)$ holomorphically continues to the $\epsilon$-annulus $A_j(\epsilon)$ and satisfies (\ref{hol est}) there.  Then the RHS of (\ref{cont2}) can be written in the form
 $$  -\frac{1}{2\pi} \int_{0}^{2\pi} \log |x - e^{is}|  \, \partial_{s} ( \Phi_{\lambda}^{\C}(x) - \Phi_{\lambda}(e^{is})) \, ds + R \phi_{\lambda}(x). $$
 Here, we continue to view $x = e^{iw}  \in \C$ as a  complex variable. Then, writing $ 2 \log |x-e^{is}| = \log (x-e^{is}) + \log (\bar{x} - e^{-is}),$ an integration by parts with respect to $\partial_{s}$  gives
\begin{align} \label{cont3}
  {\mathcal Sl}(g \phi_{\lambda})(x)   &= \frac{1}{2\pi} \Re  \int_{0}^{2\pi}  \frac{ \Phi^{\C}_{\lambda}(x) - \Phi_{\lambda}(e^{is}) }{x-e^{is}}   \, i e^{is}\, ds + R \phi_{\lambda}(x); \,\, x \in A_j^{-}(\epsilon). \end{align}

  By Taylor expansion and Morera's theorem, for each $s \in [0,2\pi],$  the function $f_{\lambda,s}(x):= \frac{ \Phi^{\C}_{\lambda}(x) - \Phi_{\lambda}(e^{is}) }{x-e^{is}}$ has the same analyticity properties as $\phi_{\lambda}^{\C}(x)$ and consequently, $f_{\lambda,s}$ extends to a holomorphic function for all $x \in A_{j}(\epsilon).$ Thus, the first term on the RHS extends harmonically to $x \in A_j(\epsilon).$

  Also, we note that for each $x \in A_{j}(\epsilon)$ the function $F_{\lambda,x}(s)= \frac{ \Phi^{\C}_{\lambda}(x) - \Phi_{\lambda}(e^{is}) }{x-e^{is}}$ extends holomorphically to $s \in [0,2\pi] + i[-\epsilon, \epsilon].$ By Cauchy's formula, we can therefore deform the contour of integration into   $A_j^{-}(\epsilon)$  by letting   $s \mapsto s - i \delta_0$  for any $0< \delta_0 <\epsilon.$ Undoing the integration by parts in (\ref{cont3}) then implies that we can write the harmonic continuation formula for the single layer term in the form:
  \begin{equation} \label{cont4}
   {\mathcal Sl}(g \phi_{\lambda})(x) =  \frac{1}{2\pi} \Re \int_{0}^{2\pi} \log |x - e^{is + \delta_0}|  \,\, g(s-i\delta_0) \, \phi_{\lambda}^{\C}(e^{is +\delta_0 }) ds + R \phi_{\lambda}(x), \,\,\,\, x \in A_j^+(\epsilon), \end{equation}
  where $\log$  in (\ref{cont4}) denotes the principal logarithm (we note that  $|x - e^{is} e^{\delta_0} |  >0 $  when $x \in A_{j}^{+}(\epsilon)$). The formula (\ref{cont4}) is useful for estimating the harmonic continuation of ${\mathcal Sl}(g \phi)$ into the exterior of $D$ (where it can blow-up exponentially in $\lambda).$ 
Indeed, since   $  | \log (x -e^{is + \delta_0})| = O(1)$  uniformly for $ (x,s) \in A_j^{+}(\delta_0/2) \times [0,2\pi]$, it follows from (\ref{cont4}) and (\ref{hol est}) that
\begin{equation} \label{cont5}
 |  {\mathcal Sl}(g \phi_{\lambda})(x) | \lessapprox e^{\lambda \delta_0}  \| \phi_{\lambda} \|_{L^2(\partial D_j)},
 \end{equation}
 uniformly for $ x \in A_{j}^+(\delta_0/2).$
  
 The double layer term ${\mathcal Dl}(\phi_{\lambda})(x)$ is analyzed similarily. In this case, we again rewrite the integral in complex form. Given the parametrization $[0,2\pi] \ni s \mapsto e^{is} = q(s)$ of $\partial D_j,$ the unit normal is $\nu(q(s)) = e^{is}$ and so,
  $$ \partial_{\nu(q(s))} G(x,q(s)) =  \partial_{\nu(q(s))} \log |x - q(s)| =    \Re  \Big( \frac{  e^{is} }{ x -e^{is}}  \Big).$$
  As a result, we can write in complex form:
  \begin{equation} \label{complex double layer}
  {\mathcal Dl}(\phi_{\lambda})(x) =  \Re  \int_{0}^{2\pi}   \phi_{\lambda}(q(s)) \cdot  \Big( \frac{q(s)}{x-q(s)} \Big)   \, ds.\end{equation}
  
   Next,  writing $ \frac{-1}{q'(s)} \partial_s    \log (x-q(s)) = \frac{1}{x-q(s)},$ one integrates by parts  in $s$ with the effect of replacing the $O(|x-q(s)|^{-1})$ singularity in (\ref{complex double layer}) with a  $\log |x-q(s)|$-type singularity. In the process, one creates an extra factor of $\lambda$ coming from differentiation of $\phi_{\lambda}(q(s)).$  To see this, we note that from Proposition \ref{boundary lemma}, $P = \oplus_{j} M_j + R$ with $R \in \Psi^{-\infty}_{a}$ and  $P \phi_{\lambda} = \lambda \phi_{\lambda}.$ Since $M_j (e^{ims_j}) = |m| e^{ms_j},$ for all $m$, it follows from Fourier expansion that 
$$ \|\partial_s \phi_{\lambda}  \|_{L^2(\partial D_j)}  = \| M_j \phi_{\lambda} \|_{L^2(\partial D_j)} = \| (P - R) \phi_{\lambda} \|_{L^2(\partial D_j)} = O(\lambda) \| \phi_{\lambda} \|_{L^2(\partial D_j)}.$$  
   
    So, modulo replacing $\phi_{\lambda}(q(s))$ with $\partial_{s} \phi_{\lambda}(q(s)),$ the analysis proceeds as in the single-layer case and $|{\mathcal Dl}( \phi_{\lambda})(x)|$  also  satisfies the bound 
 \begin{equation} \label{cont6}
 |  {\mathcal Dl}(\phi_{\lambda})(x) | \lessapprox  \lambda e^{\lambda \delta_0}  \| \phi_{\lambda} \|_{L^2(\partial D_j)},
 \end{equation}
 uniformly for $ x = e^{iw} \in A_{j}^+(\delta_0/2).$
 
 Finally, we note that for $k \geq 1,$   $\| {\mathcal Sl}(g \phi_{\lambda}) \|_{C^k}$ and  $\| {\mathcal Dl}(\phi_{\lambda}) \|_{C^k}$ are estimated in the same way, except that one must first integrate by parts $k$ times in $s$ to compensate for the $k$-derivatives in $x.$ This creates an extra multiplicative factor of $\lambda^k.$ As a consequence of (\ref{cont1})-(\ref{cont6}), we have proved Proposition \ref{cont prop}. \end{proof}



\section{Approximation of Steklov Eigenfunctions}

In this section, we construct approximations to our Steklov eigenfunctions, which we will use in our study of their nodal sets. First we use Proposition \ref{boundary lemma} and our functions $e_{m,j}$ to construct these approximations on the boundary. Then we show how to construct quasimodes for Steklov eigenfunctions in the interior.

 \subsection{Approximation of boundary eigenfunctions} 
 
As a first step, Proposition \ref{boundary lemma} allows us to show that the boundary Steklov eigenfunctions $\varphi_{\lambda_n}$ can be approximated up to exponentially decaying error by linear combinations of the quasimodes $e_{m,j}$. More specifically, one has the following 
\begin{lem}\label{quasimodeapprox} There exist functions $\psi_{\lambda_n}$ and $f_{\lambda_n}$ in $C^{\infty}(\partial D)$, $n=0,1,2,\dots$, such that $\varphi_{\lambda_n}=\psi_{\lambda_n}+f_{\lambda_n}$ for each $n$, and furthermore:
\begin{enumerate}
\item Let $\psi_{\lambda_n,j}:= \psi_{\lambda_n} |_{\partial D_j}$, and suppose $n$ is sufficiently large. Then for each boundary component $\partial D_j$, either $\psi_{\lambda_n,j}=0$ or there exist $m_{n,j}\in\mathbb Z$ as well as $b_{n,j,+}$ and $b_{n,j,-}\in\mathbb R$ such that
$$\psi_{\lambda_n,j} = b_{n,j,+}e^{2\pi im_{n,j}s_j/L_j}+b_{n,j,-}e^{-2\pi im_{n,j}s_j/L_j},$$
with $$|\lambda_n-\frac{2\pi m_{n,j}}{L_j}|\leq\frac{2\pi}{L},\ L:=\max\{L_1,\dots,L_k\}.$$
\item For each $\ell \in \Z^+,$ there exist constants $C_\ell>0$ and $\tau_0>0$  such that $$||f_{\lambda_n}||_{C^\ell(\partial D)}\leq C_\ell \lambda_n^\ell e^{-\tau_0 \lambda_n}.$$
\end{enumerate}
\end{lem}
\begin{rem}\label{splitremark} 
Since $\varphi_{\lambda_n}$ are $L^2$-normalized, for sufficiently large $n$, there exist boundary components for which $\psi_{\lambda_n,j}$ are not too small - in particular, large compared to $|f_{\lambda_n}|_{\partial D_j}$. These boundary components will be called \emph{dominant} for $u_{\lambda_n}$ (the precise definition follows in section \ref{sectiondomres}). Those components on which $\psi_{\lambda_n,j}$ are small, or vanish identically, will be called \emph{residual.}\end{rem}

Another consequence of Proposition \ref{boundary lemma} is eigenvalue asymptotics with exponentially decaying error:
\begin{prop}\label{cor:eigenvaluedecay} There exist $C>0$ and $\tau_1>0$ such that $|\lambda_n-\mu_n|\leq Ce^{-\tau_1 n}$.
\end{prop}
The proofs of Lemma \ref{quasimodeapprox} and Proposition \ref{cor:eigenvaluedecay} are relatively standard, but involve some technical linear algebra and are thus deferred to the Appendix.

\begin{rem} 
From Proposition \ref{cor:eigenvaluedecay}, for each eigenvalue $\lambda_n$ with $n$ sufficiently large, there exists at least one boundary component $\partial D_j$ and an integer $m_{n,j}$ for which
$$ |\lambda_n - \frac{2\pi m_{n,j}}{L_j} |  = O(e^{-\tau_1 n}).$$ \end{rem}

\subsection{Interior quasimodes} \label{quasimodes} In this section, we extend our approximate boundary eigenfunctions $\psi_{\lambda_n}$ to the interior and show that the extensions are good approximations to the interior Steklov eigenfunctions.
Recall that
$$ \psi_{\lambda_n}=\sum_{j=1}^k\psi_{\lambda_n,j}=\sum_{j=1}^k  \big( b_{n,j,+}e^{2\pi im_{n,j}s_j/L_j}+b_{n,j,-}e^{-2\pi im_{n,j}s_j/L_j} \big).$$
We will extend each $\psi_{\lambda_n,j}$ to an almost harmonic function $\bar u_{n,j}\in C^{\infty}(\bar D)$, then sum them to create our global interior quasimode $\bar u_n$.

As a first step, we define the rate constant
\begin{equation} \label{tau}
\tau:= \min_{j=1,...,k} \left\{ \tau_0, \frac{2\pi}{L_j} \tilde{\tau} \right\},
\end{equation}
where $\tau_0$ is the exponential rate for the approximation in Lemma \ref{quasimodeapprox} and $\tilde{\tau}$ is  the minimal analytic modulus in Proposition \ref{boundary lemma}.

%

To construct the needed approximately harmonic functions, let $s_j: \partial D_j \to [0,L_j]$ be the arc length function (with metric $g$) along the boundary circle $\partial D_j.$   Without loss of generality, we assume here that the circle $\partial D_j$ is centered at the origin.   Identifying $\partial D_j$  with $\R/2\pi \Z$ via the parametrization $\theta: [0,2\pi]  \to \partial D_j,$ we  let $$\Gamma_j := \min_{\theta \in [0, 2\pi]} s_{j}'(\theta).$$ Since $s_j$ is monotone increasing, each of the constants $\Gamma_j >0.$ We let $s_j^{\C}(z)$ be the holomorphic continuation to the strip $S_{\tau}:= [0,2\pi] + i (-\tau,\tau)$ and   put $N_j:= \max_{\theta + i \xi \in S_{\tau}} |\partial_{\theta}^{3} \Im s^{\C}_j(\theta + i \xi)|.$  We now fix a constant $\delta >0$ once and for all:
\begin{equation} \label{delta defn}
\delta = \delta(\tau):= \min_{j=1,\dots,k}  \{   \Gamma_j \tau/2 - N_j \tau^3,  \tau /2 \}, \end{equation}
with $\tau>0$ defined by (\ref{tau}).
By possibly shrinking $\tau >0$ further, we can (and do) assume from now on that $ \delta >0.$

In the following, we identify holomorphic extensions of $2\pi$-periodic $C^{\omega}$-functions to a strip over $[0,2\pi]$ with an annular neighbourhood of $|z| =1$ via the conformal map $z\mapsto e^{ i z \frac{2\pi}{L_j}}, \,\, z\in [0,2\pi] + i(-\tau,\tau).$ For each $j$, let
$$e_{n,j}^{\C}(z): = e^{ \frac{2\pi}{L_j} i m_{n,j} s_j^{\C}(z)}, \,\,\, z \in [0,2\pi] + i (-\tau, \tau).$$
Then,  natural harmonic extensions to the strip are given by 
\begin{equation} \label{int qm 0}
u_{n,j,+}(z) := \Re e^{\C}_{n,j}(z), \,\, u_{n,j,-}(z) = \Im e^{\C}_{n,j}(z), \,\,\, z \in [0,2\pi] + i (-\tau, \tau). \end{equation}
With $z = \theta + i \xi,$ we can write
$$u_{n,j,+}(z) = e^{- \frac{2\pi}{L_j} m_{n,j} \Im s_j^{\C}(z)} \, \cos \big(  \frac{2\pi}{L_j} m_{n,j} \Re s_j^{\C}(z) \big),$$
$$  u_{n,j,-}(z) = e^{- \frac{2\pi}{L_j} m_{n,j} \Im s_j^{\C}(z)} \,\, \sin \big( \frac{2\pi}{L_j} m_{n,j} \Re s_j^{\C}(z) \big), $$
where, by Taylor expansion, $\Re s_j^{\C}$ and $\Im s_j^{\C}$ are real-analytic functions with
\begin{align} \label{int qm 1}
\Im s_j^{\C}(\theta, \xi) &= s'(\theta) \xi + O(\xi^3), \nonumber \\
\Re s_j^{\C}(\theta, \xi) &= s(\theta) + O(\xi^2). \end{align}
As above, since the arclength function $s(\theta)$ is strictly increasing, 
\begin{equation}
\label{arclengthbound}
s'(\theta) \geq  \Gamma_j >0.
\end{equation}

To define the global quasimode corresponding to $u_{\lambda_n}$ we  let $\chi_j \in C^{\infty}_0(\R^2; [0,1])$ be standard cutoff equal to $1$ in annular neighbourhood of $\partial D_j$ of width $\tau/2$ and vanishing outside an annulus of width $\tau.$ Then, with $x = e^{iz},$ and $j \in \{1,...,k \}$ corresponding to an  {\em outer}  boundary component and with $b_{n,j,\pm}$ as in Lemma \ref{quasimodeapprox}, we define the global interior quasimode to be
\begin{multline} \label{int qm defn}
\bar u_{n,j}(x) := \chi_j(x) \cdot ( b_{n,j,+}  \, u_{n,j,+}(z) + b_{n,j,-} \, u_{n,j,-}(z) )  \\
= \chi_j(x) \cdot \Big(  b_{n,j,+} \, e^{-\frac{2\pi}{L_j} m_{n,j} \Im s_j^{\C}(z) } \cos \big( \frac{2\pi}{L_j} m_{n,j} \Re s_j^{\C}(z) \big)  \\ + b_{n,j,-} \, e^{ - \frac{2\pi}{L_j} m_{n,j} \Im s_j^{\C}(z) }  \sin  
\big( \frac{2\pi}{L_j} m_{n,j} \Re s_j^{\C}(z)  \big) \Big). \end{multline}

 We note that in the case of an outer boundary component, the point $ x = e^{iz} $ with $ \Im z = \xi >0$   will lie in an interior collar neighbourhood of the boundary circle.  In view of (\ref{int qm 1}), for such points, $ \Im s^{\C}_j(z) >0$  and so, the quasimode (\ref{int qm defn})  decays  exponentially in $n$. 


When $\partial D_j$ with $j \in \{1,...,k\}$ is an   {\em inner}  boundary component, we replace $e_{n,j}^{\C}(z)$ above with $\tilde{e}_{n,j}^{\C}(z):=e^{- \frac{2\pi}{L_j} i m_{n,j} s_j^{\C}(z) }$ and replace $u_{n,j,\pm}(z)$ in (\ref{int qm defn}) with $\Re \tilde{e}_{n,j}^{\C}(z)$ and $\Im \tilde{e}_{n,j}^{\C}(z)$ respectively.  In this case, the corresponding quasimodes decay exponentially in an interior collar neighbourhood where $\Im s^{\C}_j(z) < 0.$

The function $\bar u_{n,j} \in C^{\infty}(\bar{D})$ is approximately harmonic in $D$ and agrees with the boundary quasimode $\psi_{\lambda_n,j}$ on $\partial D_j.$  Indeed, since $\Delta u_{n,j,\pm} = 0,$  by Leibniz' rule for the Laplacian 
$$ \Delta \bar u_{n,j} = \Delta [ \chi_j \cdot ( b_{n,j,+} u_{n,j,+} + b_{n,j,-} u_{n,j,-} ) ]$$
$$ = 2 \nabla_x \chi_j \cdot 
\nabla  ( b_{n,j,+} u_{n,j,+} + b_{n,j,-} u_{n,j,-} )  + \Delta \chi_j \,  ( b_{n,j,+} u_{n,j,+} + b_{n,j,-} u_{n,j,-}).$$
 Since the derivatives of $\chi_j$ are supported in an annular neighbourhood of $\partial D_j$ where $ \tau/2 < |x| < \tau,$ it follows in view of (\ref{int qm 1}) that with $\delta = \delta(\tau)$ as in (\ref{delta defn}),
 $$ \| \Delta_x \bar u_{n,j} \|_{C^0(\bar D)} = O(\lambda_n e^{-\delta \lambda_n})$$ and similarily,
 
\begin{equation} \label{int qm 2}
\| \Delta_x \bar u_{n,j} \|_{C^{\ell}(\bar{D})}   = O_{\ell}( \lambda_n^{\ell + 1} \, e^{- \delta \lambda_n}), \quad \ell \in \Z^+.
\end{equation}

Now for each $n$, we  define the {\em interior quasimode}
\begin{equation} \label{int qm 3}
\bar u_n:=\sum_{j=1}^k \bar u_{n,j}. \end{equation}
\noindent As we now prove, the function $\bar u_n$ in (\ref{int qm 3}) is the quasimode approximation to the Steklov eigenfunction $u_{\lambda_n}$ in a collar neighbourhood of the boundary $\partial D$ that we seek. 

\begin{lem}   \label{APPROX} For any fixed $\ell \in \Z^+,$ there exist constants $C_j(\ell)>0; j=1,2,$  such that $$||u_{\lambda_n} -\bar u_n||_{C^\ell(\overline D)}\leq C_1(\ell) \lambda_n^{C_2(\ell)} e^{-\delta \lambda_n}.$$\end{lem}
\begin{proof}

For each $n$, let 
\begin{equation}
\bar f_n:=u_{\lambda_n}-\bar u_n,
\end{equation} so that $\bar f_n$ is the error in the interior quasimode approximation. To $||\bar f_n||_{C^\ell(\bar D)}$ we note that by construction, $\bar u_n=\psi_{\lambda_n}$ on $\partial D$, so $\bar f_n|_{\partial D}=f_{\lambda_n}$, which by Lemma \ref{quasimodeapprox} and the definition of $\delta$ shows that 
\begin{equation}\label{bdy estimates}
||\bar f_n|_{\partial D}||_{C^\ell (\partial D)}\leq C_k \lambda_n^\ell e^{-\tau_0 \lambda_n}\leq C_\ell \lambda_n^\ell e^{-\delta \lambda_n}.
\end{equation}
Moreover, both $\bar u_n$ and $\bar f_n$ are approximately harmonic. Indeed, in view of \eqref{int qm 2} and \eqref{int qm 3}, we see
\begin{equation} \label{qm bounds}
\| \Delta_x \bar f_{n} \|_{C^{\ell}(\bar{D})}  =
\| \Delta_x \bar u_{n} \|_{C^{\ell}(\bar{D})}  \leq C_{\ell}\lambda_n^{\ell+1}e^{-\delta\lambda_n}.
\end{equation}

To begin, we claim that
\begin{equation}\label{czeroestimate}
\| \bar f_n\|_{C^0(\bar D)}\leq C\lambda_n e^{-\delta\lambda_n}.
\end{equation}
Indeed, let $\Phi$ be  the solution of $\Delta\Phi= -1$ on $D$ satisfying the Dirichlet boundary condition $\Phi|_{\partial D}=0$. Then, by the maximum principle, $\Phi >0$ in $D.$  We define
$$\bar g_n:=C_0e^{-\delta\lambda_n}+C_1\lambda_ne^{-\delta\lambda_n}\Phi > 0.$$
By \eqref{bdy estimates} and \eqref{qm bounds}, $ \bar f_n \leq \bar g_n$ on $\partial D$ with $\Delta \bar f_n \geq \Delta \bar g_n.$ Similarily, $\bar f_n \geq - \bar g_n$ on $\partial D$ with $ \Delta (-\bar g_n) \geq \Delta \bar f_n$ on $D$. Consequently, again by the maximum principle,
$$ | \bar f_n | \leq | \bar g_n | = \bar g_n\mbox{ on }\bar D,$$
which proves \eqref{czeroestimate}.  

To treat the cases where $m \geq 1,$ we  note that by standard elliptic estimates (\cite{Kr}, Theorem 5.4.1), for any $m \in \Z^+$ even, 
\begin{equation} \label{elliptic1}
\| \bar f_n \|_{H^{m}(\bar D)} \leq C_m \big( \| \Delta^{\frac{m}{2}} \bar f_n \|_{L^2(\bar D)} + \|  \bar f_n \|_{L^2(\bar D)} + \| \bar f_n|_{\partial D} \|_{H^{m - \frac{1}{2}}(\partial \bar D)} \big). \end{equation}
 Substitution of \eqref{czeroestimate} and (\ref{bdy estimates}) in (\ref{elliptic2}) gives
 \begin{equation} \label{elliptic2}
\| \bar f_n \|_{H^{m}(\bar D)} \leq C_m \big( \| \Delta^{\frac{m}{2}} \bar f_n \|_{L^2(\bar D)} + \lambda_n e^{-\delta \lambda_n} + \lambda_n^{m - \frac{1}{2}} e^{-\delta \lambda_n} \big). \end{equation}
Finallly, using (\ref{qm bounds}) and the fact that $||\Delta^{m/2}\bar f_n||_{L^2(\bar D)}\lessapprox ||\Delta \bar f_n||_{C^{m/2-1}(\bar D)}$, it follows that 
$$ \| \bar f_n \|_{H^{m}(\bar D)} = O_{m}( \, ( \lambda_n^{\frac{m}{2}} +\lambda_n^{m - \frac{1}{2}} + \lambda_n \,) \, e^{-\delta \lambda_n} \,) = O_{m} ( \max \{ \lambda_n^{m-\frac{1}{2}}, \,\lambda_n \} \, e^{-\delta \lambda_n} ).$$
 The proof then follows by an application of the Sobolev lemma which gives
 $$ \| \bar f_n\|_{C^{\ell}(\bar D)} \lessapprox \| \bar f_n\|_{H^m(\bar D)}, \quad m > \ell + 1.$$
\end{proof}



\section{Proof of Theorem \ref{main}}
We now prove Theorem \ref{main}; first we prove lower bounds on nodal length and then proceed to upper bounds. Recall that $u_\lambda$ is the Steklov eigenfunction on $\Omega$ with eigenvalue $\lambda$, and $v_{\lambda}=f^*u_{\lambda}$ is the corresponding eigenfunction on $D$. Throughout, we denote the nodal length of a function $u$ on a set $A$ by $\mathcal L(u,A)$. Note that since the Steklov eigenfunction in the conformal model is  $v_\lambda=f^*u_\lambda$, the ratio of $\mathcal L(u_{\lambda},A)$ and $\mathcal L(v_{\lambda},f^*(A))$ is bounded from above and below by geometric constants independent of $\lambda$ and of $A\subset\Omega$; hence we may work with the conformal model $D$ without creating problems for our estimates. To simplify the writing, we abuse notation somewhat and henceforth denote both the Steklov eigenfunction and its image in the conformal model simply by $u_{\lambda}.$

\subsection{Dominant and residual boundary components}\label{sectiondomres}
Our first step is to group the boundary components $\partial D_j; j=1,...,k,$ into {\em dominant} and {\em residual} categories. The idea here is that since the basic quasimodes approximate actual Steklov eigenfunctions to $O(e^{-\tau \lambda})$ in $C^k$ norm (by Lemma \ref{quasimodeapprox}), eigenfunctions that have $L^2$-norm less than $e^{-\tau \lambda}$  along a boundary component have no meaningful quasimode approximations. We note that since $\lambda_n \approx n$ (see section \ref{boundvalues}), the bounds in Lemma \ref{quasimodeapprox} in terms of $e^{-\tau_0 n}$ are comparable to ones in terms of $e^{-\tau \lambda_n}$ by possibly adjusting the constant $\tau>0.$ We choose to work here with the latter. The simple example of the annulus (see Example \ref{annulus}) shows that   $L^2$-norms of Steklov eigenfunctions can indeed be exponentially small with  $\| \phi_{\lambda} \|_{L^2(\partial D_j)} \sim e^{-\lambda C}$ along certain  boundary circles with some $C>0$. Moreover, it is not clear in general that $C>0$ will be smaller than the exponential  rate $\tau >0$ appearing in the quasimode approximation in Lemma \ref{quasimodeapprox}.

\begin{defn} \label{dominant}  Recall the definition of $\delta$ from \eqref{delta defn}. The boundary component $\partial D_j$ is said to be {\em dominant} for $u_{\lambda}$ provided
$$ \| u_{\lambda} \|_{L^2(\partial D_j)} \geq e^{-\delta \lambda/3}.$$
Otherwise, it is said to be {\em residual}.\end{defn}
We refer to Example \ref{annulus} for an illustration of the concepts of dominant and residual boundary components. If a surface has a single boundary component, it is obviously dominant for each $u_\lambda$.

A key observation about our dominant and residual boundary components is the following. Recall the approximation in Lemma \ref{quasimodeapprox}, where the approximate boundary eigenfunction $\psi_{\lambda_n}$ is a linear combination of trigonometric polynomials on each boundary component $\partial D_j$ with coefficients $b_{n,j,\pm}$. The following is an immediate consequence of our definition of dominant and residual, combined with the error bounds in Lemma \ref{quasimodeapprox}:
\begin{prop}\label{prop:meaningofdominant} If $\partial D_j$ is dominant for $u_{\lambda_n}$, then 
$$|b_{n,j,+}|^2 + |b_{n,j,-}|^2 \geq e^{- \frac{2 \delta \lambda_n} {3 } }  + O(e^{-4\delta \lambda_n})  \geq \frac{1}{2} e^{- \frac{2 \delta \lambda_n} {3 } }$$
as long as $n$ is sufficiently large.
\end{prop}

\begin{rem} We note that even though $L^2$-mass can be exponentially small along boundary components, it has recently been proved by a Carleman argument applied to Steklov eigenfunctions on the boundary $\partial \Omega$ (\cite[Theorem 1]{Zh}), that they satisfy quantitative unique continuation in any dimension. More precisely, there is a constant $C= C(\Omega)>0,$ such that
\begin{equation} \label{uc}
\| \phi_{\lambda} \|_{L^2(\partial \Omega_j)} \geq e^{-C \lambda} \,\,\, \forall j =1,...,k. \end{equation} 
where $\partial \Omega_j; j=1,\dots,k$ denote disjoint subsets of $\partial \Omega.$\end{rem}

Note that as $\lambda \to \infty$, the dominant components of the boundary change depending on $\lambda$. However, there are only finitely many configurations of dominant and residual components, so by passing to subsequences we may consider each configuration separately. Therefore, without loss of generality, we assume that the boundary components 
$\partial D_{j}; j=1,\dots,P$ are dominant and the remaining ones with $j=P+1,\dots, k$ are residual. 
Given the $L^2$-normalization condition (\ref{normalization}), we may also assume that $||u_{\lambda}||_{\partial D_1}\geq\frac{1}{2k}$ (at least one dominant boundary component has this property for sufficiently large $\lambda$, and there are only finitely many configurations). In the following, we denote by $A_j(\alpha)$ an $\alpha$-width annular neighbourhood of the dominant boundary component $\partial D_j$.


\subsection{Nodal length near a dominant boundary component}
\label{sss}
We now obtain optimal upper and lower bounds for the nodal length of a Steklov eigenfunction $u_{\lambda}$ near a dominant boundary component.  
\begin{lem} Let $\partial D_j$ be a dominant boundary component  and  let $ \tau>0$ and $\delta(\tau) >0 $ be defined as above (see \ref{delta defn} and \ref{tau}). Then,   there exists $\alpha = \alpha(\tau) >0$ and geometric constants $C_j = C_j(\partial D, D); j=1,2,$   such that, for any Steklov eigenfunction $u_{\lambda}$  \[C_1 \lambda\leq\mathcal L(Z_{u_{\lambda}}\cap A_j(\alpha))\leq C_2\lambda.\]
\end{lem}
\begin{proof} We use the decomposition $u_{\lambda_n}=:\bar u_{n,j}+\bar f_{n,j}$, with $C^k$-bounds on $\bar f_{n,j}$ as in Lemma \ref{APPROX} (note that $\bar f_{n,j}=\bar f_n$ in $A_j(\alpha)$). Rescale all of these functions by multiplying by $C>0$, where $C$ is chosen so that $b_{n,j,+}^2+b_{n,j,-}^2=1$. By Proposition \ref{prop:meaningofdominant}, $C\leq 2e^{\frac{2\delta\lambda_n}{3}}$ for sufficiently large $n$. None of the zero sets change, and by our observations about the size of $C$, we now have \begin{equation}\label{rescaledfnbounds}||f_{n,j}||_{C^k}= \mathcal O_k (e^{-\delta \lambda_n/2 + \delta \lambda_n/3}) = \mathcal O_k(e^{-\delta \lambda_n/6}).\end{equation}

For simplicity, assume that $\partial D_j$ is the outer boundary (a similar argument works for the inner boundaries).  As in section \ref{quasimodes}, we use the coordinates $(\theta,\xi) \in [0,2\pi] + i (0, \delta)$ in the  $\delta$-strip model of the annulus $A(\alpha)$ with complex variable $z = \theta + i\xi.$ Here, $\xi >0$ corresponds to the interior of $D$ where eigenfunctions decay exponentially in $\lambda_n.$   A direct computation with the quasimodes of \eqref{int qm defn} gives
\begin{equation}\label{trigfunctionmagic0}\bar u_{n,j}^2+ \lambda_n^{-2}  |\partial_{\theta}\bar u_{n,j}|^2  \gtrapprox e^{- \frac{4\pi m_{n,j}}{L_j}  \Im s^{\C}_j(\theta,\xi) }.  \end{equation}

We recall from (\ref{int qm 1}) that   $ \Im s^{\C}_j(\theta,\xi) = s_j'(\theta) \xi + O(\xi^3)$ with $ \tilde{\Gamma_j} \geq s_{j}'(\theta) \geq \Gamma_j>0$ and so $\Im s_j^{\C}(\theta,\xi) \approx \xi $ for $0 \leq \xi < C_1(\tau)$ with $C_1(\tau)$ small.  Thus, choosing $C_1(\tau) >0$ sufficiently small, one can arrange that 
$$ 0 \leq \Im s_j^{\C}(\theta,\xi) < 2 C_1(\tau) \tilde{\Gamma_j}.$$

Then, by possibly shrinking $C_1(\tau)>0$ in (\ref{trigfunctionmagic0}) further, one can arrange that for $0 \leq \xi \leq C_1(\tau),$

\begin{equation}\label{trigfunctionmagic}\bar u_{n,j}^2+ \lambda_n^{-2}  |\partial_{\theta}\bar u_{n,j}|^2  \gtrapprox e^{- \delta  \,  m_{n,j}/ 5} \gtrapprox e^{-\delta \lambda_n/5}.  \end{equation}

In the last inequality in (\ref{trigfunctionmagic}), we have use the fact that $|m_{n,j}-\lambda_n|<2\pi/L$.

 Now, fix   $\alpha =  C_1(\tau)$  and  let $E_{\lambda_n}$ be the subset of $A_j(\alpha) = \{ (\theta,\xi); 0 \leq \xi < \alpha\}$ where $|\bar u_{n,j}|^2 \leq e^{-\delta \lambda_n/4}$.  By formula  \eqref{rescaledfnbounds} , for $\lambda_n$ sufficiently large, $Z_{u_{\lambda_n}}\cap A_j(\alpha)\subseteq E_{\lambda_n}$.
But using \eqref{trigfunctionmagic}, for $\lambda_n$ sufficiently large,
\begin{align} \label{ift}
|\partial_{\theta}\bar u_{n,j}(\theta,\xi)| &\geq C \lambda_n e^{- \frac{2\pi m_{n,j}}{L_j} \Im s^{\C}_j(\theta,\xi)} \geq C' \lambda_n e^{-\delta \lambda_n/10}, \,\, (\theta,\xi) \in E_{\lambda_n}.\end{align}
Using \eqref{rescaledfnbounds}, eigenfunctions are approximated by quasimodes in $C^k$-norm to $O(e^{-\delta \lambda_n/6})$-error and so, it follows from (\ref{ift}) that for the actual eigenfunctions,
\[|\partial_{\theta} u_{\lambda_n}(\theta,\xi)|\geq  C \lambda_n e^{- \frac{2\pi m_{n,j}}{L_j} \Im s^{\C}_j(\theta,\xi)} \geq C'\lambda_n  e^{- \delta \lambda_n/10}, \,\, (\theta,\xi) \in E_{\lambda_n}. \]

So on each connected component of $E_{\lambda_n}$, $u_{\lambda_n}$ is either monotonically increasing or monotonically decreasing in $\theta$. Since for each fixed $\xi$, $\bar u_{n,j}$ has exactly one zero in each connected component of $E_{\lambda_n}$, so must $u_{\lambda_n}$ for each fixed $\xi$. From (\ref{ift}), it follows by the analytic implicit function theorem that the nodal set $Z_{u_{\lambda_n}}\cap A_j(\alpha)$ is a union of $C^{\omega}$ curves, one in each connected component of $E_{\lambda_n}$, which are graphs with dependent variable $\theta$ and independent variable $\xi$. There are precisely $2\mu_n$ zeroes of $\bar u_{n,j}$, so we can write $Z_{u_{\lambda_n}}\cap A_j(\alpha)$ as a union of graphs of $C^{\omega}$  functions $f_1(\xi),\dots,f_{2\mu_n}(\xi)$.

It remains to control the geometry of these graphs. To do this, we claim that there exists a constant $C'>0$ such that
\begin{equation}\label{gradratio}  |\partial_{\xi} u_{\lambda_n}|\leq C'|\partial_\theta u_{\lambda_n}|\textrm{ in }E_{\lambda_n}. \end{equation}

Given (\ref{gradratio}), it follows by the chain rule that $|f_i'(\xi)|$ is uniformly bounded in $\lambda_n$ for each $\xi$. Therefore the arc length of each $f_i(\xi)$ is uniformly bounded above and below. Since there are $2\mu_n$ such graphs, and $2\mu_n\sim 2\lambda_n$, the result follows.

To prove (\ref{gradratio}), we note that by (\ref{ift}),
 \[|\partial_{\theta}\bar u_{n,j}(\theta,\xi) |\geq C_2 \lambda_n e^{-  \frac{2\pi m_{n,j}}{L_j} \Im s_j^{\C}(\theta,\xi)}, \,\, (\theta,\xi) \in E_{\lambda}.\]
 We also note that from \eqref{int qm 1}, $\partial_{\xi} \Re s_j^{\C}(\theta,\xi) = O(\xi)$ where $|\xi| < \alpha(\tau) \ll1$ in $E_{\lambda_n}$ and $\partial_{\xi} \Im s_j^{\C}(\theta,\xi) = s_j'(\theta) + O(\xi^2)$ with  $s_j'(\theta) \geq \Gamma_j>0.$ Thus, by another direct computation with the quasimodes \eqref{int qm defn}, it follows  that
\[|\partial_{\xi} \bar u_{n,j}(\theta,\xi)|\leq C_2 \lambda_n e^{- \frac{2\pi m_{n,j}}{L_j} \Im s_j^{\C}(\theta,\xi)}, \,\, (\theta,\xi) \in E_{\lambda_n}.\]
The analogue of \eqref{gradratio} is therefore immediately true for the quasimodes $\bar u_{n,j}$ in place of the eigenfunctions $u_{\lambda_n}$. Transferring to $u_{\lambda_n}$ via Lemma \ref{APPROX} introduces errors that are $ {\mathcal O}(e^{-\delta\lambda_n/6}).$  However, in view of (\ref{ift}),  these are lower-order than $e^{-\frac{2\pi m_{n,j}}{L_j} \Im s_j^{\C}} \gtrapprox e^{-\lambda_n \delta/10}$ in $E_{\lambda_n}.$ Such errors are therefore negligible, so the pointwise estimate in (\ref{gradratio}) is satisfied for the actual eigenfunctions $u_{\lambda_n}.$
\end{proof}

This gives both upper and lower bounds for nodal length near dominant boundary components, and the overall lower bound is an immediate corollary. It remains to estimate, from above, nodal lengths in the interior and near residual boundary components.

\subsection{Estimates for the harmonic extensions of interior eigenfunctions near residual boundary components} To estimate nodal lengths all the way up to residual boundary components, we need to extend our domain slightly near the residual boundary components (independent of $\lambda$) and use  this slightly enlarged domain. In the following, we continue to work in the conformal model. So let $\widetilde D$ be an extension of $D$ to include a ($\lambda$-independent) neighborhood of the residual boundaries. We claim the following:

\begin{lem}\label{extension} Given $\delta >0$ as above, there exists an   open domain $\widetilde D(\delta) \supset \bar{D}$ with smooth boundary containing open neighbourhoods of the residual boundary circles  such that the Steklov eigenfunctions $\varphi_{\lambda}$ all have a harmonic extension to $\widetilde D(\delta)$ denoted by $u_{\lambda}.$ Moreover, in the extended neighbourhoods near the residual components one has the following estimate: 
$$||u_{\lambda}||_{C^1(\widetilde D(\delta)\setminus \bar D)} \leq C e^{-\frac{\delta}{6} \lambda}$$
 for some geometric constant $C>0$ depending on $\delta$ but independent of $\lambda$.
\end{lem}
\begin{proof} This follows from Proposition \ref{cont prop}. Given any residual boundary component $\partial D_j$ and $\delta_0 < \delta,$ consider the annular neighbourhood $A_j(\delta_0) \supset \partial D_j$ with
$$ \| u_{\lambda} \|_{C^1(A_j(\delta_0))} \leq C \lambda^2 e^{ \lambda \delta_0} \| \phi_{\lambda} \|_{L^2(\partial D_j)} = O( \lambda^2 e ^{\lambda ( - \frac{\delta}{3} + \delta_0) )} ).$$
Choose $\delta_0 < \frac{\delta}{6}$ and let $\widetilde D = D \cup ( \cup_{j=P+1}^{k} A_{j}(\delta_0) ).$
\end{proof}
To simplify notation, in the following we denote the extended domain $\widetilde D(\delta)$ simply by $\widetilde D.$

\subsection{Nodal length bounds away from dominant boundary components}
We begin the analysis away from the dominant boundary components by considering simply connected sets in the interior of the extended domain $\tilde D$. In this section, we prove nodal bounds on these sets, then complete the proof with a covering argument. 
\begin{lem}\label{lem:scbound}Let $U\Subset\tilde U$ be simply connected open sets in $\tilde D$, each with $C^{\infty}$ boundary, where:
\begin{enumerate}
\item $U$ is not contained in $\cup_{j=1}^PA_j(\alpha)$, where $\alpha=\alpha(\tau)$ was fixed in section \ref{sss}.
\item For each $j=1,\dots,P$, $\partial\tilde U\cap\partial D_j$ is a nontrivial circular subarc of $\partial D_j$. 
\item \emph{(Technical assumption)} Translating the angle coordinate $\theta$ if necessary, we assume that $\partial\tilde U\cap A_j(\alpha)$, for each $j$, may be parametrized in the form $\{(\theta,\xi);|\theta|\leq\theta_0,\xi=F(\theta)\}$ with $F(\theta)=0$ for $|\theta|<\epsilon_0$, 
$F'(\theta) >0$ for $\theta>\epsilon_0$ and $F'(\theta)<0$ for $\theta< -\epsilon_0$.
\end{enumerate} Then there exists a constant $C$, depending on $U$ and $\tilde U$ but independent of $\lambda$, such that \begin{equation}\mathcal L(u_{\lambda},U)\leq C\lambda.\end{equation}
\end{lem} 
\begin{rem} The technical assumption, made for convenience (see \eqref{universal upper}), says that the boundary of $\tilde U$ approaches each dominant boundary component $\partial D_j$ in a monotone fashion, travels along it for some distance, then departs in a monotone fashion. Note that $F(\cdot)$ measures distance from the boundary $\partial D_j$. See Figure \ref{fig:mainfig} for an illustration of sets $U$ and $\tilde U$ satisfying the hypotheses of the Lemma.
\end{rem}

\begin{proof} 
\emph{Step 1:} Pick $x_0\in U\setminus\cup_{j=1}^{P}A_j(\alpha)$, which is possible by condition (1). Conformally mapping $\tilde U$ to a disk and $x_0$ to the origin keeps the functions harmonic, and changes the length of the nodal set by at most a geometric constant. Therefore, we may assume that $\tilde U$ is a disk of radius 1 and that $x_0$ is the origin.

Note that $U$ is contained in some disk $B_r$ with $r<1$. We cover $B_r$ by finitely many disks $A_1,\dots,A_k$ with centers $x_1,\dots,x_k$ and radii $r_1,\dots,r_k$, where each $A_i$ has the property that $4A_i$, the disk with center $x_i$ and radius $4r_i$, is contained in $B_1$. By finiteness, we only need to prove the bound for each $A_i$. By the nodal measure bound of  Han and Lin \cite[Theorem 2.3.1]{HL},
\begin{equation} \label{HanLin}
{\mathcal L}(u_{\lambda},A_i) \leq C_1 N_{u_{\lambda}}(2A_i),\textrm{ where } N_{u_{\lambda}}(2A_i)=\frac{2r_i \int_{2A_i}|\nabla u_{\lambda}|^2}{\int_{\partial(2A_i)}u_{\lambda}^2}. \end{equation}
Note that Han-Lin's result is stated for a ball of radius 1, but scaling shows that it also holds for a ball of radius $r$, modulo a geometric constant which we absorb into $C_1$. Since $4A_i \subset B_1=\tilde U$ for all $i=1,\dots, k$,  a uniform control estimate on the frequency function (\cite[Theorem 2.2.8]{HL}, see also \cite[Section 3.2.2]{HV}) yields:
$$
N_{u_\lambda}(2A_i) \le C_2 N_{u_\lambda}(\tilde U)
$$
Then, summing up over the disks $A_i$ and using Green's identity as well as  the  Cauchy-Schwarz inequality we get:
\begin{equation}\label{upperboundeq1}
\mathcal L(u_{\lambda},U)\leq   \frac{\int_{\tilde U}|\nabla u_{\lambda}|^2}{\int_{\partial \tilde U}u_{\lambda}^2} \le 
C\frac{||\partial_{\nu}u_{\lambda}||_{\partial\tilde U}||u_{\lambda}||_{\partial\tilde U}}
{||u_{\lambda}||^2_{\partial\tilde U}}.\end{equation}
Here and further on  we use the simplified notation  $||\cdot||_{\tilde U} := ||\cdot||_{L^2(\tilde U)}$.

\smallskip

\emph{Step 2:} Since the argument for each dominant circle is the same, consider here the intersection of the open set $\tilde U$  with the $\tau$-annular neighbourhood of one fixed dominant boundary circle, say $\partial D_1.$ Let us estimate the numerator from above and the denominator from below, beginning with the denominator. We continue to divide through by a constant and without loss of generality assume that $b_{n,1,+}^2 + b_{n,1,-}^2 = 1$,
where $b_{n,1,+}$ and $b_{n,1-}$ are constants defined in Lemma \ref{quasimodeapprox}. 
 Consider the norm over the portion of $\tilde U$ consisting of an arc $\gamma$ along $\partial D_1$.  By the interior quasimode approximation in Lemma \ref{APPROX}, property (2) in  Lemma \ref{lem:scbound} and  a direct computation with the explicit quasimodes in (\ref{int qm defn}), there is a constant $c'>0$ such that
\begin{equation} \label{denom}
||u_{\lambda}||^2_{\partial \tilde{U}}\geq ||\bar u_{n,1} ||^2_{\gamma} - e^{-c' \lambda},
\end{equation}
where $\bar u_{n,1}(x)$ is the global interior quasimode defined in \eqref{int qm defn}. 
Let  the curve segment $\gamma = \{(\theta, \xi); |\theta| < \epsilon_0,   \xi = 0 \}$ with some $\epsilon_0 >0$. Since $\Im s_j^{\C}(\theta, \xi=0)= 0,$
it follows  from Lemma \ref{APPROX} that
$$ ||\bar u_{n,1} ||^2_{\gamma} = \int_{|\theta| < \epsilon_0} \left| \frac{2\pi}{L_1} b_{n,1,+}\,\cos (m_{n,1}\,s(\theta)) + \frac{2\pi}{L_1} b_{n,1,-} \,\sin (m_{n,1}\,s(\theta)) \right|^2 \, d\theta \geq C(\epsilon_0) >0.$$ Together with \eqref{denom} this yields a lower bound 
\begin{equation}
\label{lbd}
||u_{\lambda}||^2_{\partial \tilde{U}}\geq C'(\epsilon_0)>0.
\end{equation}
 To get upper bounds for the numerator in (\ref{upperboundeq1}), we split the integral over $\partial \tilde{U}$ into three pieces: $\partial \tilde{U} = \gamma_1 \cup \gamma_2 \cup \gamma_3.$ The first piece, $\gamma_1 = \partial \tilde{U} \cap A(\alpha)$ is the putative leading term, coming from the part of $\partial \tilde{U}$ inside the annulus $A(\alpha)$ around the dominant circle $\partial D_1.$ As a consequence of our technical assumption (3),

\begin{align} \label{universal upper}
||u_{\lambda}||_{\gamma_1}^2  &=  \int_{|\theta| < \epsilon_0} \left| \frac{2\pi}{L_1} b_{n,1,+}\,\cos (m_{n,1}\,s(\theta)) + \frac{2\pi}{L_1} b_{n,1,-} \,\sin (m_{n,1}\,s(\theta)) \right|^2 \, d\theta  \nonumber \\
& +  \int_{\theta_0 > |\theta| > \epsilon_0} {\mathcal O}(e^{- \Gamma_1 \lambda [ F(\theta) + O(F(\theta)^3) ] })  \, d\theta + O(e^{-\lambda \delta/2})  \approx 1, \end{align}
since $ \int_{\theta_0 > |\theta| > \epsilon_0} {\mathcal O}(e^{- \Gamma_1 \lambda [ F(\theta) + O(F(\theta)^3) ] })  \, d\theta = O(\lambda^{-1})$ for $\delta_0 >0$ small by a change of variables $\theta \mapsto F(\theta)$, using that $|F'(\theta)| >0$ in the range $\epsilon_0 < |\theta| < \delta_0.$

The second piece $\gamma_2 = \partial \tilde{U} \cap [ \bar{D} \setminus A(\alpha)].$ This is the piece of $\partial \tilde{U}$ in the closed domain $\bar{D}$ outside the annulus. Here, we know by   formula  \eqref{rescaledfnbounds}  that 
 with $\alpha = C_1(\tau)>0$ sufficiently small, 
 $$\|u_{\lambda}\|_{\gamma_2}=  \| \bar u_{n,1} \|_{\gamma_2} + O(e^{- \frac{\delta \lambda}{6} }) = O(e^{- \frac{ \Gamma_j \pi C_1(\tau) \, \lambda}{L_j} } ) +  O(e^{- \frac{\delta \lambda}{6} }).$$
 Here, the  first term is bounded using \eqref{int qm defn},  \eqref{int qm 1} and \eqref{arclengthbound}.

Finally, the third piece $\gamma_3 = \partial \tilde U \setminus \bar{D} $ is the part of the boundary of $\tilde{U}$ that is the exterior to $\bar{D}$ in the extended domain $\tilde{D}.$ Using Lemma \ref{extension} we have
$$ \| u_{\lambda} \|_{\gamma_3} = O(e^{-\delta \lambda/6}).$$
Consequently,
\begin{equation}\label{num1}
\| u_{\lambda} \|_{\partial \tilde{U}} = O(1).\end{equation}

For the normal derivative term, the same decomposition argument shows that
\begin{equation} \label{num2}
||\partial_{\nu}u_{\lambda}||_{\partial\tilde U} = O(\lambda). \end{equation}
Then, from (\ref{lbd}), (\ref{num1}) and (\ref{num2})  it follows that
\begin{equation}\label{upperboundeq1.1}\mathcal L(v_{\lambda},U)\leq C\frac{||\partial_{\nu}u_{\lambda}||_{\partial\tilde U}||u_{\lambda}||_{\partial\tilde U}}{||u_{\lambda}||^2_{\partial \tilde{U}}} \leq C \lambda. \end{equation}
This completes the proof of the lemma. \end{proof}

Now we use a covering lemma to finish the proof.

\begin{lem} \label{covering lemma} Suppose $\Omega$ is a connected, compact, smooth Riemannian manifold with smooth boundary $\partial\Omega$ consisting of $k$ connected components $M_1,\dots,M_k$. Then there exists a covering of $\Omega$ by finitely many open sets, each of which is connected, contractible, and satisfies the assumptions on $\tilde U$ in Lemma \ref{lem:scbound}.
\end{lem}
\begin{proof}
First take a finite collection of open balls (and half-balls, near the boundary) $\{U_i\}$ which covers $\Omega$ (compactness guarantees finiteness is possible). Take small enough balls to ensure that $\overline U_i$ is contractible for each $i$ (for example, assume that each ball has radius less than the injectivity radius of $\Omega$). For each $i$, we will find an open set $V_i$ which contains $U_i$, has contractible closure, and moreover intersects each boundary component. The collection $\{V_i\}$ is then the desired cover.

So: begin with a set $U_i$. Since $\Omega$ is connected, for each $j$ between 1 and $k$, let $\gamma_j:[0,1]\to\Omega$ be a smooth path with $\gamma_j(0)\in U_i$ and $\gamma_j(1)\in M_j$.

First, let $t_1=\sup\{t\in [0,1] | \gamma_1(t)\in U_i\}$. Then let $\tilde\gamma_1$ be the portion of the image of $\gamma_1$ corresponding to $t\in[t_1,1]$; let $W_1$ be open such that $\bar W_1$ is a contractible closed neighborhood of $\tilde\gamma_1$, and let $U_{i,1}=U_i\cup W_1$. Now $\bar U_{i,1}$ is contractible (as $\bar W_1$ contracts to $\tilde\gamma_1$, and $U_i\cup\tilde\gamma_1$ contracts to $U_i$), and intersects $M_1$.

Then proceed inductively to define $U_{i,j}$; for each $j$, let $t_j=\sup\{t\in [0,1] | \gamma_j(t)\in U_{i,j-1}\}$, let $\tilde\gamma_j$ be the portion of the image of $\gamma_j$ corresponding to $t\in[t_j,1]$, and let $W_j$ be the interior of a contractible closed neighborhood of $\tilde\gamma_j$. Then let $U_{i,j}=U_{i,j-1}\cup W_j$. At each step, $\bar U_{i,j}$ is contractible, as it contracts to $\bar U_{i,j-1}$ which is assumed contractible by the inductive hypothesis. And $\bar U_{i,j}$ intersects $M_j$. Appropriately smoothing out and adjusting the boundary of $U_{i,k}$ in each neighborhood $A_j(\alpha)$ gives us a new set $V_i$ satisfying all assumptions of Lemma \ref{lem:scbound}, completing the proof. Note in particular that the technical assumption (3) in Lemma \ref{lem:scbound} is essentially local and can always be arranged. \end{proof}

Applying this lemma to $\tilde D$, we obtain a finite collection of open sets $\tilde U_i$, $i=1,\dots,\ell$, which cover $\tilde D$. For each $x\in D\setminus \cup_{j=1}^P A_j(\alpha)$, which is an open subset compactly contained in the interior of $\tilde D$, there is an open ball $U_{x}$ about $x$ compactly contained in some $\tilde U_i$. By compactness, a finite collection of these balls, $\{U_k\}_{k=1}^{l}$, cover $D\setminus\cup_{j=1}^PA_j(\alpha)$. Applying Lemma \ref{lem:scbound} to each such $U_k$ and the $\tilde U_i$ it is contained in gives us, for each $k$,
\[\mathcal L(Z_{u_{\lambda}}\cap U_k)\leq C_k\lambda.\] Finiteness of the covering completes the proof of Theorem \ref{main}.

\subsection{Real analytic Riemannian surfaces: general case} \label{general}
We consider here the general case when $(\Omega,g)$ is a compact real-analytic Riemannian surface with  boundary $\partial \Omega$,  and show that the argument above applies to this setting. It is well known that given a real-analytic Riemannian manifold it is always possible to extend it across its  boundary to a slightly larger open real-analytic Riemannian manifold (see, for instance, \cite[Example 5.50]{EC}).
Let  $(\tilde{\Omega}, \tilde g)$ be such an extension of  the  surface $(\Omega,g)$. On the open Riemann surface $\tilde{\Omega},$ by the Behnke-Stein Theorem (\cite{V} Theorem 9.4.6), there exists a strictly-plurisubharmonic exhaustion function for $\tilde{\Omega}$, and so there is a Green's function $G(z,w) \in {\mathcal D}'(\tilde{\Omega} \times \tilde{\Omega})$ with the property that
$$ \Delta_{g,z} G(z,w) = \delta(w-z), \,\,\, (z,w) \in \tilde{\Omega} \times \tilde{\Omega},$$
and
\begin{equation} \label{general green}
 G(z,w) = -\frac{1}{2\pi} \log |z-w| -a(z,w),  \end{equation}
where $a$ is harmonic in both variables and hence $a \in C^{\omega}(\tilde{\Omega} \times \tilde{\Omega}).$

Next, we note that although we have used the Koebe conformal model in the genus zero planar case for convenience, it is not necessary. Indeed, returning to the initial Steklov eigenfunctions $\phi_{\lambda} \in C^{\omega}(\partial \Omega)$ one can directly derive the potential layer equations used to obtain the crucial quasimode approximations for the Steklov eigenfunctions along $\partial \Omega.$ Therefore, the arguments of section \ref{boundvalues} may be repeated with the only difference that one needs to take into account the contribution of $a(z,w)$. However, since this function is harmonic, its contribution can be absorbed in the last two terms of the expression in Claim \ref{decompint}. Consequently, just as in the planar case,
$$ \frac{1}{2} \phi_{\lambda_n} = \lambda_n G \phi_{\lambda_n} - \lambda_n K_1 \phi_{\lambda_n} - K_2 \phi_{\lambda_n},$$
where the Schwartz kernels of $K_1$ and $K_2$ are elements of $C^{\omega}(\partial \Omega \times \partial \Omega)$ and $G$ is the same as in Claim \ref{decompint}. 
The quasimode approximation results in Proposition \ref{boundary lemma} and Lemma \ref{quasimodeapprox} and  the estimates in Lemmas \ref{APPROX} and \ref{extension}  then follow as in the multiply-connected planar case after decomposing the boundary $\partial \Omega$ into dominant and residual components. This completes the proof in the general case. \qed



\appendix
\section{Proofs of Lemma \ref{quasimodeapprox} and Proposition \ref{cor:eigenvaluedecay}}
Throughout the appendix, for simplicity, we work with a real basis of eigenfunctions.
\subsection{An auxiliary lemma}
Lemma \ref{quasimodeapprox} depends on the presence of recurring spectral gaps, so in order to prove it, we must first prove the following lemma.
\begin{lem}\label{clustering} Fix any sufficiently small $\epsilon>0$. Then there exist pairwise disjoint closed intervals $\mathcal I_i=[A_i,B_i]\subset\mathbb R^{+}$, $i=1,\dots,\infty$, with the following properties:
\begin{enumerate}
\item $\sigma(P)\cup\sigma(M_j)\subseteq\bigcup_{i=1}^{\infty}\mathcal I_i$;
\item $A_{i+1}\geq B_i+\epsilon$ for all $i$;
\item For $i\geq 2$ and all $j$, each $\mathcal I_i$ contains at most one distinct element of $\sigma(M_j)$;
\item For each $i$, there exists $n$ such that $\lambda_n\in\mathcal I_i$ and $\mu_n\in\mathcal I_i$.
\end{enumerate}
\end{lem}
\begin{proof}[Proof of Lemma \ref{clustering}] 

Pick $\epsilon<\frac{\pi}{2kL}$, where $L$ is the maximum of the boundary lengths $L_1,\dots,L_k$. Since $|\lambda_n-\mu_n|\to 0$, there exists $N$ such that if $n\geq N$, then $|\lambda_n-\mu_n|<\epsilon/2$. 

Observe that since $\{\mu_n\}$ is a union of $k$ arithmetic progressions, each with period $\geq 2\pi/L$, there are at most $k$ elements of $\{\mu_n\}$ in any interval of length $\pi/L$. Thus any such interval must contain a gap of length at least $\pi/kL>2\epsilon$ with no elements of $\{\mu_n\}$.
We may therefore choose some $m_2$ so that $m_2>N$ and $\mu_{m_2}\geq\mu_{m_2-1}+2\epsilon$. Consider the interval $[\mu_{m_2},\mu_{m_2}+\pi/L]$ and observe that it must itself contain a gap of length at least $2\epsilon$. Let $n_2\geq m_2$ be such that $\mu_{n_2}$ is the left endpoint of the first such gap; we have $\mu_{n_2}-\mu_{m_2}\leq \pi/L$. Then let $m_3=n_2+1$, so that $\mu_{m_3}$ is the right endpoint of that gap. Choosing $n_3$ as with $n_2$ and iterating this process, we produce $m_2,n_2,m_3,n_3,\dots$.

Now let
\[\mathcal I_1=[0,\mu_{m_2}-3\epsilon/2];\ \mathcal I_i= [\mu_{m_i}-\epsilon/2,\mu_{n_i}+\epsilon/2]\ \forall\, j\geq 2.\]
We claim that these intervals satisfy each property we want. Indeed, by construction, property (2) is automatic. Property (1) follows immediately from the fact that $|\lambda_n-\mu_n|<\epsilon/2$ whenever $n\geq m_2-1\geq N$. To see property (3), note that each $\mathcal I_i$ with $i\geq 2$ has length at most $\pi/L+\epsilon<2\pi/L$, and hence contains at most one element of each arithmetic progression with period $\geq 2\pi/L$. Finally, property (4) is also immediate by construction (note that $\mathcal I_1$ contains $\mu_0=0$). This completes the proof. \end{proof}

We now use these intervals to split the sequence $\{\lambda_n\}$ into pieces with gaps of size at least $\epsilon$ between each. Specifically, fix some $\epsilon$ and let $\mathcal I_i$, $i=1,2,\dots$ be the intervals constructed int Lemma \ref{clustering}. For each $i$, we let $\mathcal L_i$ be the set of all $j$ for which $\lambda_j\in \mathcal I_i$, and we say that 
\begin{equation}\label{equivrel}
j_1\sim j_2\iff\exists\, i\in\mathbb N\mbox{ s.t. }j_1\in\mathcal I_i\mbox{ and }j_2\in\mathcal I_i. 
\end{equation}
Since each interval $\mathcal I_i$ for $i\geq 2$ contains at most $k$ eigenvalues $\lambda_n$ (and each interval, including $\mathcal I_i$, contains at least one), there exists a universal constant $C$ such that
\begin{equation}\label{comparedrates}
j\in\mathcal L_i\Rightarrow j\leq Ci\mbox{ and }i\leq j.
\end{equation}

\subsection{Completing the proofs}

\begin{proof}[Proof of Lemma \ref{quasimodeapprox}] Let $\{\bar e_n\}$ be an orthonormal basis of eigenfunctions for $M$, with eigenvalues $\mu_n$, and note that each $\bar e_{n,j}=\bar e_n|_{\partial D_j}$ is either a trigonometric polynomial with frequency $\mu_n$ or identically zero. Write $a_{j,k}=\langle \varphi_{\lambda_j}, \bar e_k\rangle$. By orthonormality and completeness of the eigenbases, we have
\begin{equation}\label{orthogonal} \bar e_k=\sum_{j=1}^{\infty}a_{j,k}\varphi_{\lambda_j};\quad \varphi_{\lambda_k}=\sum_{j=1}^{\infty}a_{k,j}\bar e_j.\end{equation} 
We claim that if we set
\begin{equation}\label{quasimodebuilding}\psi_{\lambda_n}=\sum_{j\sim n}a_{n,j}\bar e_j,\quad f_{\lambda_n}=\sum_{j\nsim n}a_{n,j}\bar e_j,\end{equation}
where $\sim$ is defined by \eqref{equivrel}, then the conditions of the Lemma are satisfied. Indeed, condition (1) is obvious, since by the definition of our intervals $\mathcal I_i$, the frequency of each $\bar e_j$ with $j\sim n$ is within $2\pi/L$ of $\lambda_n$. It remains to prove (2).

In what follows, we let $C$ and $\tau>0$ be universal constants (which may depend on the geometry of $D$ and on the conformal map from $D$ to $\Omega$) and re-label at will. From Proposition \ref{boundary lemma} and the Weyl asymptotics, we know that there exists $\tau>0$ such that $|P\bar e_j-\mu_j\bar e_j|\leq Ce^{-\tau j}$. Plugging in the first equation in \eqref{orthogonal}, we see that
\[||\sum_{n=1}^{\infty}a_{n,j}(\lambda_n-\mu_j)\varphi_{\lambda_n}||_{\infty}\leq Ce^{-\tau j}\]
and hence that the same is the true of the $L^2$-norm. Summing only over the $n$ with $n\nsim j$, we obtain
\begin{equation}\label{L2bound}(\sum_{n\nsim j}a_{n,j}^2(\lambda_n-\mu_j)^2)^{1/2}\leq Ce^{-\tau j}.\end{equation}
By property (2) of Lemma \ref{clustering}, we know $|\lambda_n-\mu_j|\geq\epsilon$ whenever $n\nsim j$, so
\begin{equation}\label{betterbound}\sum_{n\nsim j}a_{n,j}^2\leq Ce^{-\tau j}.\end{equation}
However, by \eqref{orthogonal}, for each $j$ we have $\sum_{n}a_{n,j}^2=1$. So
\begin{equation}\label{unitvectors}1-Ce^{-\tau j}\leq \sum_{n\sim j}a_{n,j}^2\leq 1.\end{equation}
In addition, suppose $j_1\sim j_2$ with $j_1< j_2$. Then, again using \eqref{orthogonal} and the orthonormality of $\varphi_k$, as well as Cauchy-Schwarz,
\begin{equation}\label{nearorthogonal}|\sum_{n\sim j_1}a_{n,j_1}a_{n,j_2}|=|-\sum_{n\nsim j_1}a_{n,j_1}a_{n,j_2}|\leq(\sum_{n\nsim j_1}a_{n,j_1}^2)^{1/2}(\sum_{n\nsim j_1}a_{n,j_2}^2)^{1/2}\leq Ce^{-\tau j_1}.\end{equation}

Thinking of $A=\{a_{n,j}\}$ as an infinite matrix, let $M_i$ be the square submatrix of $A$ with $n$ and $j$ in $\mathcal L_i$; note that by condition (3) of Lemma \ref{clustering} $M_i$ is of size at most $2k\times 2k$. We interpret \eqref{unitvectors} and \eqref{nearorthogonal} as saying that the columns of $M_i$ have almost unit length and are almost orthogonal to each other. In particular, for each $i$, we may write $M_i^T M_i=I+R_i$. By \eqref{unitvectors} and \eqref{nearorthogonal}, there exists a constant $\tau>0$ such that $||R_i||_{\infty}\leq Ce^{-\tau i}$ (the norm $||M||_{\infty}$ denotes the supremum of the entries of $M$). Therefore, for large enough $i$, $(I+R_i)$ is always invertible, with inverse of the form $I+S_i$, with $||S_i||_{\infty}\leq Ce^{-\tau i}$ as well. In fact, $(I+R_i)^{-1}M_i^TM_i=I$, so $M_i^{-1}$ exists and equals $(I+R_i)^{-1}M_i^T=M_i^T+S_iM_i^T$. Multiply the equation $M_i^TM_i=I+R_i$ on the left by $M_i$ and on the right by $M_i^{-1}$ to yield
\[M_iM_i^T=M_i(I+R_i)M_i^{-1}=I+M_iR_i(I+S_i)M_i^T.\]
We see now that $M_iM_i^T$ has the form $I+Q_i$, where $Q_i$ satisfies $||Q_i||_{\infty}\leq Ce^{-\tau i}$ (note that the entries of $M_i$ and $M_i^T$ are bounded by 1). 

This now tells us that the \emph{rows} of $M_i$ have almost unit length: i.e. that
\[1-Ce^{-\tau i}\leq\sum_{j\sim n,\, j\in\mathcal L_i}a_{n,j}^2\leq 1.\]
By subtraction and \eqref{comparedrates}, there exists $\tau>0$ such that
\begin{equation}\label{transposebound}\sum_{j\nsim n}a^2_{n,j}\leq Ce^{-\tau n},\end{equation}
which is the analogue of \eqref{betterbound}, but summing in $j$ instead of in $n$. Additionally, it is an immediate consequence of \eqref{betterbound} and \eqref{transposebound} that if $j\nsim n$,
\begin{equation}\label{indivtermbounds}a_{n,j}^2\leq Ce^{-\tau j};\quad a_{n,j}^2\leq Ce^{-\tau n}.\end{equation}

By the Sobolev embedding theorem, for each $k\in\mathbb N_0$, we have $||\cdot||_{C^k(\partial D)}\leq C||\cdot||_{H^{k+1}(\partial D)}$. We apply this to $f_{\lambda_n}$, knowing that we can compute Sobolev norms of the $\bar e_j$ directly:
\[||f_{\lambda_n}||^2_{C^k}\leq C||f_n||^2_{H^{k+1}}\leq C\sum_{j\nsim n}(1+\mu_j)^{k+1}|a_{n,j}|^2.\]
By Weyl asymptotics of the $\mu_j$,
\[||f_{\lambda_n}||^2_{C^k}\leq C\sum_{j\nsim n}(1+j)^{k+1}a_{n,j}^2=C\sum_{j\nsim n,\, j<n}(1+j)^{k+1}a_{n,j}^2+C\sum_{j\nsim n,\, j>n}(1+j)^{k+1}a_{n,j}^2.\]
Using \eqref{indivtermbounds}, then the Weyl asymptotics again, we have
\begin{multline}||f_{\lambda_n}||^2_{C^k}\leq C\sum_{j\nsim n,\, j<n}(1+n)^{k+1}e^{-\tau n}+C\sum_{j\nsim n,\,j>n}(1+j)^{k+1}e^{-\tau j}\\
\leq Cn(1+n)^{k+1}e^{-\tau n}+C\int_{n}^{\infty}(1+t)^{k+1}e^{-\tau t}\,dt\leq C\lambda_n^{k+2}e^{-\tau n}.
\end{multline} Choosing $\tau$ slightly less than the current $\tau/2$, we can absorb $\lambda_n^2$ and take square roots, completing the proof.
\end{proof}

\begin{proof}[Proof of Proposition \ref{cor:eigenvaluedecay}] From the discussion before \eqref{L2bound}, considering each term individually, we see that for every $n$ and $j$,
\begin{equation}\label{easybound}|a_{n,j}(\lambda_n-\mu_j)|\leq Ce^{-\tau j}.\end{equation}
Using the matrix notation from the previous proof, let $V_i$ and $W_i$ be the diagonal matrices whose entries are $\{\lambda_j: j\in\mathcal L_i\}$ and $\{\mu_j: j\in\mathcal L_i\}$ respectively. Using \eqref{comparedrates}, \eqref{easybound} implies the statement
\[||M_iV_i-W_iM_i||_{\infty}\leq Ce^{-\tau i}.\]
For sufficiently large $i$, $M_i$ is invertible with inverse uniformly bounded in the $||\cdot||_{\infty}$ matrix norm. We deduce that
\[M_iV_iM_i^{-1}=W_i+\bar R_i,\ ||\bar R_i||_{\infty}\leq Ce^{-\tau i}.\]
So $W_i+\bar R_i$ is diagonalized by $M_i$ and has eigenvalues $\{\lambda_j:j\in\mathcal L_i\}$. By the Bauer-Fike theorem (see, e.g., \cite[Observation 6.3.1]{HJ}), the eigenvalues of $W_i=(W_i+\bar R_i)-\bar R_i$ lie in disks centered at each $\lambda_j$ of radius $||M_i||_{\infty}\cdot ||M_i||^{-1}_{\infty}\cdot ||R_i||_{\infty}\leq Ce^{-\tau i}$. Although the perturbed eigenvalues may move from one disk to another if there is overlap, there are at most $k$ disks, so they can move at most $2kCe^{-\tau i}$. Relabeling $C=2kC$, this shows that $|\lambda_n-\mu_n|\leq Ce^{-\tau i}$, where $\lambda_n\in\mathcal L_i$. The result follows immediately by another use of \eqref{comparedrates}, replacing $\tau$ with $\tau_1=\tau/C$.
\end{proof}

\end{document}